\documentclass{amsart}
\usepackage[utf8]{inputenc} 
\usepackage[T1]{fontenc}
\usepackage{lmodern}
\usepackage{graphicx}
\usepackage[english]{babel}
\usepackage{wrapfig}
\usepackage{amsthm}
\usepackage{setspace}
\usepackage{amsmath}

\usepackage{amssymb}

\usepackage{comment}
\usepackage{caption}
\captionsetup{position=below}

\usepackage[a4paper]{geometry}  

\usepackage[bottom]{footmisc}
\usepackage[all]{xy}
\usepackage{float} 
\usepackage{hyperref}
\usepackage{setspace}
\newsavebox{\auteurbm}
  {\small\slshape%
  \savebox{\auteurbm}{\upshape\sffamily#1}%
  \begin{flushleft}}
  {\\[4pt]\usebox{\auteurbm}
  \end{flushleft}\normalsize\upshape}
  
  \usepackage{mathtools}
  \usepackage{dsfont}
  \usepackage{amsmath}
  \usepackage{amsthm}

  \theoremstyle{definition}
  \newtheorem{defn}{Definition}[section]
  
  \newtheorem*{defn*}{Definition}
 
  \theoremstyle{plain}
  \newtheorem{thm}{Theorem}[section]
  
  \newtheorem{prop}{Proposition}[section]
  \newtheorem*{prop*}{Proposition}
  \newtheorem{lem}{Lemma}[section]
  \newtheorem{cor}{Corollary}[section]
   \newtheorem*{cor*}{Corollary}
  \newtheorem*{theo*}{Theorem}
  \newtheorem*{thm*}{Theorem}
  
  \theoremstyle{remark}
  
  \newtheorem{rem}{Remark}[section]

  \newtheorem{nota}{Notation}[section]

 \usepackage[toc,page]{appendix}
\usepackage{calrsfs}
\usepackage{tikz-cd}

\newcommand{\R}{\mathbb{R}}
\newcommand{\A}{\mathcal{A}}
\newcommand{\B}{\mathcal{B}}
\newcommand{\C}{\mathcal{C}}

\DeclareMathOperator*{\F}{U}

\newcommand{\M}{\mathcal{M}}

\newcommand{\U}{\operatorname{\mathcal{U}}}

\usepackage{pstricks,pst-node,pst-tree}

\usepackage{bm}

\usepackage{relsize}

\newcommand\independent{\protect\mathpalette{\protect\independenT}{\perp}}
\def\independenT#1#2{\mathrel{\rlap{$#1#2$}\mkern2mu{#1#2}}}

 \usepackage{xcolor}

 \newcommand\N{\mathbb{N}}
 \newcommand\cat[1]{\textbf{#1}}
 
 \newcommand{\da}{\hat{a}^*}

\newcommand{\cheek}[1]{\overset{\neg\vee}{{#1}}}

\newcommand{\Pa}{\mathcal{P}}

\newcommand{\g}{\mathcal{F}}

\DeclareMathOperator*{\vect}{\textbf{Gr}}
\DeclareMathOperator*{\W}{U}

\newcommand{\colim}{\operatorname{colim}}

\usepackage{appendix}

\title{Intersection property and interaction decomposition}

\author{Grégoire Sergeant-Perthuis}

\usepackage{pdfpages} 

\usepackage[intoc]{nomencl}
\makenomenclature

\usepackage{pdfpages}



\newcommand{\Mod}{\cat{Mod}}




\newcommand{\im}{\operatorname{im}}


\makeatletter
\newtheorem*{rep@theorem}{\rep@title}
\newcommand{\newreptheorem}[2]{%
\newenvironment{rep#1}[1]{%
 \def\rep@title{#2 \ref{##1}}%
 \begin{rep@theorem}}%
 {\end{rep@theorem}}}
\makeatother


  \theoremstyle{plain}
\newreptheorem{thm}{Theorem}
\newreptheorem{prop}{Proposition}
\newreptheorem{cor}{Corollary}

\makeatletter
\newcommand{\mylabel}[2]{#2\def\@currentlabel{#2}\label{#1}}
\makeatother

\numberwithin{equation}{section}

\newcommand{\Vect}{\cat{Vect}}

\usepackage{multiaudience}
\SetNewAudience{long}



\usepackage{minitoc}  
 
\usepackage{changepage} 
 
\usepackage{chngcntr}
\usepackage{etoolbox}
 \makeatletter
   {\par}
\makeatother

\AtBeginEnvironment{subappendices}{%
\section*{Appendix}
\addcontentsline{toc}{section}{Appendices}
\counterwithin{figure}{section}
\counterwithin{table}{section}
}

\usepackage{cite}

\begin{document}

\begin{abstract}
The decomposition into interaction subspaces is a hierarchical decomposition of the spaces of cylindrical functions of a finite product space, also called factor spaces. It is an important construction in graphical models and a standard way to prove the Hammersley-Clifford theorem that relates Markov fields to Gibbs fields and plays a central role in Kellerer's result for the linearized marginal problem. We define an intersection of sum property, or simply intersection property, and show that it characterizes collections of vector subspaces over a poset that can be hierarchically decomposed into direct sums, giving therefore a general setting for such construction to hold. We will call this generalization the interaction decomposition. The intersection property is the Bayesian intersection property, introduced in \cite{GS1}, when specified to factor spaces which, under this new perspective on the interaction decomposition, appears to be a structure property. An application is the extension of the decomposition into interaction subspaces for any product of any set.

\smallskip
\noindent \textbf{MSC2020 subject classifications:} Primary 06F25; secondary 62H22 \\
\noindent \textbf{Keywords.} Decomposition into interaction subspaces, Ordered vector spaces

\end{abstract}
\maketitle 

\section{Introduction}
\subsection{Motivation}

For a finite set $I$ and a finite product space $E=\prod_{i\in I} E_i$, the factor subspaces (or factor spaces) $\F(a)=\R^{E_a}$, seen as a subspace of $\R^E$, with $a\subseteq I$ and $E_a=\prod_{i\in a} E_i$, can be decomposed into direct sums of a collection of subspaces $(S_a\subseteq \R^E,a\subseteq I)$, called interaction subspaces (\cite{Kellerer1964,Lauritzen,Speed,Yeung,Matus}),

\begin{equation}\label{chapitre-2-decomposition-des-interactions}
\forall a\subseteq I \ \F(a)=\bigoplus_{\substack{b\in \Pa(I) \\b\subseteq a}} S_b
\end{equation}

In \emph{A Note on Nearest-Neighbour Gibbs and Markov Probabilities} \cite{Speed}, Speed traces back the first appearance of such a decomposition to Kellerer \cite{Kellerer1964}, Asmussen, Davidson \cite{Davidson} and Haberman \cite{Haberman}.\\

$\Pa(I)$, the power set of $I$, is a partially ordered set, or poset, and it can also be seen as a category with only one morphism between each of its elements $b\to a$, every times that $b\subseteq a$. In particular one can see $\F$ as a functor from a poset $\A=\Pa(I)$ to the category of vector spaces $\Vect$. In this document we characterize functors for which such decomposition exists.\\

Firstly we must give a definition of what a decomposition for a functor over a poset would be.

\begin{defn*}[Decomposable functor]
A functor $G:\A\to \Vect$ is decomposable if and only if there is a collection of vector spaces $(S_a,a\in \A)$ such that for any $a\in \A$ 

\begin{equation}
G(a)\cong \bigoplus_{b\leq a} S_a
\end{equation}

and for $b\in \A$ with $b\leq a$, $G^b_a$ is isomorphic to the inclusion $\bigoplus_{c\leq b} S_c\to \bigoplus_{c\leq a} S_c$. We will call $(S_a,a\in \A)$ a decomposition of $G$.

\end{defn*}

If a functor is decomposable then its morphisms $(G^b_a | a,b\in \A, b\leq a)$ are injective; we will call such functors as injective. We show (Proposition \ref{chapter-2-colim}) that every injective functor is isomorphic to an increasing functions from a poset $\A$ to the poset of vector subspaces of a vector space, seen as a category; we therefore restrict our attention to the latter for this article.

\begin{nota}

For $n\in \N$, we will note $[n]$ the integer interval $[0,n]$ and a collection of elements over a set $E$, $(f_x)_{x\in E}$, will also be referred to as $f_x, x\in E$.\\
In this document $\A$ will denote a poset and $V$ a vector space. 

\end{nota}

\begin{nota}
For $M$ a module we will denote $\vect M$ the poset of sub-modules of a module $M$; $\hom(\A,\vect M)$ will be the set of increasing functions between a poset $\A$ and $\vect M$; when $M$ is a vector space, $\vect M$ is also called the Grassmannian of $M$.

\end{nota}

\begin{rem}
As any poset is a category, in particular $\F\in \hom(\A,\vect V)$ is a functor from $\A$ to $\vect V$.
\end{rem}

\subsection{Main results of this document}

\subsubsection{Well-founded poset}
The main result of this document holds for well-founded poset, let us now recall what a well-founded poset is.

\begin{defn}\label{chapitre-2-well-founded_def}
A poset $\A$ is well-founded if any chain of $\A$ has a minimal element. This condition is often stated as the descending chain condition: every strictly decreasing sequences of elements of $\A$ terminates.
\end{defn}

\begin{rem}
Any non-empty subposet of a well-founded poset has at least one minimal element. 
\end{rem}

\begin{prop}\label{transfinite-induction}
Let $\A$ be a well-founded poset. To show that a property $P$ holds for any $a\in \A$ is suffices to show that,

\begin{equation}
\forall a\in \A, \quad [\forall b\in \A, b<a, \quad P(b)]\implies P(a)
\end{equation}

\end{prop}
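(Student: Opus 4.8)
The plan is to prove the principle of transfinite (well-founded) induction by contradiction, exploiting the descending chain condition that defines well-foundedness. Suppose the implication in the hypothesis holds for every $a\in\A$, but $P$ fails to hold for all of $\A$. Then the set $S=\{a\in\A \mid \neg P(a)\}$ is non-empty. By the remark immediately preceding the statement, any non-empty subposet of a well-founded poset has at least one minimal element, so let $a_0$ be a minimal element of $S$. Then for every $b<a_0$ we have $b\notin S$, i.e.\ $P(b)$ holds; the hypothesis applied at $a=a_0$ then forces $P(a_0)$, contradicting $a_0\in S$. Hence $S=\emptyset$ and $P$ holds throughout $\A$.

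The only subtlety — and the step I would flag as needing care — is the justification of the remark that a non-empty subposet $S$ of a well-founded poset has a minimal element, since Definition.(\ref{well-founded_def}) phrases well-foundedness as "every chain has a minimal element" / "every strictly decreasing sequence terminates." If $S$ had no minimal element, one could build by dependent choice a strictly decreasing sequence $a_0>a_1>a_2>\cdots$ in $S$ (pick any $a_0\in S$; given $a_n$, since $a_n$ is not minimal in $S$ there is $a_{n+1}\in S$ with $a_{n+1}<a_n$), contradicting the descending chain condition. So the argument rests on (a weak form of) the axiom of choice, which is standard in this setting; alternatively, if one takes "every non-empty subset has a minimal element" as the primitive form of well-foundedness, the remark is immediate and no choice is needed. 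I would state this briefly rather than belabour it.

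Concretely, I would write: assume $\forall a\in\A,\ \big([\forall b<a,\ P(b)]\implies P(a)\big)$, and suppose for contradiction that $S=\{a\in\A\mid \neg P(a)\}\neq\emptyset$. By the preceding remark $S$ admits a minimal element $a$. Minimality gives $P(b)$ for all $b\in\A$ with $b<a$, so the hypothesis yields $P(a)$, i.e.\ $a\notin S$ — a contradiction. Therefore $S=\emptyset$, which is exactly the claim that $P(a)$ holds for every $a\in\A$. This is short enough that there is no real obstacle beyond getting the logical quantifier bookkeeping right and citing the correct earlier remark for the existence of minimal elements.
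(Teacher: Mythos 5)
Your proof is correct and is essentially the same as the paper's: both take the set of counterexamples, extract a minimal element via the remark that non-empty subposets of a well-founded poset have minimal elements, and derive a contradiction from the induction hypothesis. Your extra remark on dependent choice being needed to pass from the descending chain condition to the existence of minimal elements is a fair observation, but it does not change the argument.
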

\begin{proof}

Let $\A$ be well-founded. Let us assume that, 

\begin{equation}
\forall a\in \A, \quad [\forall b\in \A, b<a, \quad P(b)]\implies P(a).
\end{equation}

Let $\B$ be the set of elements of $\A$ that do not verify $P$, in other words,

\begin{equation}
\B=\{a\in \A |\quad \neg P(a)\}
\end{equation}

Let us suppose that $\B\neq \emptyset$. Let $b$ be a a minimal element of $\B$. For any $c<b$, $P(c)$ holds. But by hypothesis this implies that $P(b)$ holds. This is a contradiction. Therefore $\B$ is empty, which ends the proof. 
\end{proof}

\begin{rem}\label{induction-2-form}
Proposition \ref{transfinite-induction} is a generalization of the proof by induction and we will refer to it as the second form of proof by induction in this document.
\end{rem}

\subsubsection{Intersection property}

Let $\A$ be a poset, for any subposet $\B\subseteq \A$ of $\A$ one can define the lower-completion of $\B$, denoted as $\hat{\B}$, by,

\begin{equation}
\hat{\B}=\{ a\in \A: \ \exists b\in \B,  a\leq b\}
\end{equation}

For $a\in \A$ we denote $\hat{\{a\}}$ as $\hat{a}$. A lower set is a subposet of $\B\subseteq \A$ that is full under lower completion, i.e. $\hat{\B}= \B$; the set of lower sets of $\A$ will be denoted $\U(\A)$, it is the lower Alexandrov topology of $\A$.\\

The intrinsic condition that characterizes the decomposable functors is the following.

\begin{defn*}[Intersection property]
Let $\A$ be any poset, an increasing function $\W\in \hom(\A,\vect{V})$ is said to verify the intersection property $(I)$ if and only if, 

\begin{equation}\tag{I}
\forall \B,\C\in \U(\A), \quad \sum_{b\in \B}\W(b) \cap \sum_{c\in \C}\W(c)\subseteq \sum_{a\in \B\cap \C}\W(a)
\end{equation}

\end{defn*}

The intersection property in the context of generalized factor spaces is the Bayesian intersection property of \cite{GS1}. For well-founded poset there is a subset of conditions of $(I)$ that is sufficient to prove decomposability and that is therefore equivalent to $(I)$.

\begin{defn*}

$\W\in \hom(\A,\vect{V})$ is said to verify the property (C) if and only if,

\begin{equation} \tag{C}
\forall a\in \A, \sum_{b:b\leq a}\W(b) \cap \sum_{b:a\not \leq b}\W(b)\subseteq \sum_{b:b\lneqq a}\W(b)
\end{equation}
\end{defn*}

Showing that an increasing collection of vector subspaces satisfies $(C)$ is easier than showing $(I)$.

\subsubsection{Equivalence theorem}

\begin{repthm}{central-result-1}
Let $\A$ be a well-founded poset and $V$ any vector space. $\F\in \hom(\A,\vect{V})$ is decomposable if and only if $\F$ verifies (C) or (I).
\end{repthm}

\subsection{Structure of this document}

We choose to focus the first part of this document to proving the equivalence theorem for the simpler case of increasing function over a finite poset (Sections \ref{chapitre-2-section-1},\ref{chapitre-2-section-2}). We will then use this result to show this theorem in the general setting (Section \ref{chapitre-2-section-3}).\\

Being decomposable is not functorial, in other words let $\phi: \B\to \A$ be an increasing function and $\F\in \hom(\B, \vect V)$ then $\phi_{!} \F$ is in general not decomposable, where for any $a\in \A$,

\begin{equation}
\phi_{!}\F(a)= \sum_{b:\phi(b)\leq a} \F(b)
\end{equation}

In Section \ref{chapitre-2-section-4} we give some cases where the pushforward of a decomposable functor is decomposable. Finally we apply the results of \cite{GS1} and the equivalence theorem of this article to give a generalized version of the decomposition into interaction subspaces for factor spaces.

\section{Colimits of injective functors over a poset}

\begin{defn}[Injective functor]
A functor $G$ from a poset $\A$ to the category of $(R)$-modules $\cat{Mod}$ will be called injective if for any $a,b\in \A$ such that $b\leq a$, $G^b_a$ is a monomorphism, i.e. is injective.
\end{defn}

\begin{prop}\label{chapter-2-colim}
Let $G$ be an injective functor from any poset $\A$ to the category of $(R)$-modules $\cat{Mod}$. Let $(G_a,a\in \A, \colim G)$ be the initial cocone over $G$; then $G_a$ is injective for any $a\in \A$.

\end{prop}

\begin{proof}

Let $v\in G(a)$ be such that $[v_a]=0$; therefore there are $m\in \N$, $m_1\in \N$, a collection $(b_i\leq a,i\in [m])$ of elements of $\A$, a collection $(c_i\geq a,i\in [m_1]$, a collection $(u_i\in G(b_i), i\in [m])$, a collection $(w_i\in G(a), i\in [m_1])$ such that,

\begin{equation}
 v\times a =\sum_{i\in [m]} (G^{b_i}_a(u_i)\times a - u_i\times b_i) +\sum_{i\in [m_1]} (G^a_{c_i}(w_i)\times c_i - w_i\times a)
\end{equation}

Therefore there is a finite set $B$ of elements strictly less than $a$ and a finite set $C$ of elements strictly greater than $a$, there are two collections $(u_b\in G(b),b\in B)$ and $(w_c\in G(a),c\in C)$ such that,

\begin{equation}
v\times a=\sum_{b \in B}(G^b_a(u_b)\times a - u_b\times b)+ \sum_{c\in C}(G^a_c(w_c)\times c -w_c\times a
\end{equation}

As $B\cap C=\emptyset$, the projections on $b\in B$ and $c\in C$ gives $u_b=0$ and $w_c=0$. Therefore $v=0$.

\end{proof}

\begin{cor}
Let $G:\A\to \Mod$ be an injective functor, there is a module $M$ such that $G$ is isomorphic to $\tilde{G}\in \hom(\A, M)$
\end{cor}

\begin{proof}
Let $M= \colim_a G(a)$ and for any $a\in \A$ let $\tilde{G}(a)= \im G_a$ for any $a\in \A$, by Proposition \ref{chapter-2-colim}, $\tilde{G}$ is isomorphic to $G$.
\end{proof}

In what follows, without loss of generality, we consider functors in $\hom(\A, \vect(V))$.

\section{Decomposability and Intersection property}\label{chapitre-2-section-1}

\begin{defn}[Decomposable injective functor]\label{def-decomposition}
$S_a, a\in \A$ is a decomposition of $\W\in \hom(\A,\vect{V}) $ if and only if, 

\begin{enumerate}
\item for all $a\in \A$, $S_a \in \vect{V}$.
\item $\underset{a\in \A}{\sum}S_{a}$ is a direct sum in $V$; in other words,

\begin{center}
\begin{equation}\label{direc-sum-induction}
\begin{array}{ccccc}
p & : & \underset{a\in \A}{\bigoplus} S_a & \to & V \\
 & & (v_a)_a & \mapsto & \underset{a\in \A}{\sum} v_a \\
\end{array}
\end{equation}
\end{center}

is injective. 

\item for all $a\in \A$, $\W(a)=\underset{b\leq a}{\sum}S_b$.

\end{enumerate}

$\W$ is then said to be decomposable.
\end{defn}

\begin{rem} When $p$ in Equation (\ref{direc-sum-induction}) is injective, one says that $\underset{a\in \A}{\sum} S_a$ is in direct sum in $V$ and it is noted $\sum_{a\in \A} S_a=\bigoplus_{a\in \A} S_a$; with this notation the previous definition can be restated as $\underset{a\in \A}{\sum} S_a= \underset{a\in \A}{\bigoplus} S_a$ and $\forall a\in \A, \W(a)= \underset{b\leq a}{\bigoplus} S_b$.
\end{rem}

\begin{rem}
Let $V=\R^2$ and $a_1$, $a_2$, $a_3$, three lines in $V$ that are pairwise different. Let $\A=\{a_1,a_2, a_3\}$ and $\W:\A \overset{i}{\hookrightarrow} \vect{V}$ be the inclusion map. $\F$ is not decomposable: if it were, $\dim  \underset{a\in \A}{\sum} \W(a) =3$. 
\end{rem}

We would like to find a condition for which an increasing function $\F:\A\rightarrow \vect V$ is decomposable. The main result of this section is to show that $\F$ is decomposable if an only if it verifies a certain intersection of sums property.\\

\begin{defn}[Order-embedding]
 Let $\A$, $\B$ be two posets and let $f:\A\to \B$ be an increasing function. $f$ is an order-embedding if for all $a_1,a_1\in \B$, 

$$f(a_1)\leq f(a_2)\quad \implies \quad a_1\leq a_2$$

An order-embedding function is always injective. 
 \end{defn}
 
\begin{defn}[Intersection property]\label{intersection-property}
Let $\A$ be any poset, an increasing function $\W\in \hom(\A,\vect V)$ is said to verify the intersection property $(I)$ if and only if, 

\begin{equation}\label{def-intersection-property}\tag{I}
\forall \B,\C\in \U(\A), \quad \underset{b\in \B}{\sum}\W(b) \cap \underset{c\in \C}{\sum}\W(c)\subseteq \underset{a\in \B\cap \C}{\sum}\W(a)
\end{equation}

\end{defn}

Let $\B\subseteq \A$ be a subposet of $\A$, we will note $\W(\B)=\underset{b\in \B}{\sum}\W(b)$. Let us consider on $\mathcal{P}(\A)$ the following order,

\begin{equation}
\forall \B_{1},\B_{2}\in \mathcal{P}(\A) \quad \B_{1}\leq \B_{2} \quad \iff \quad \hat{\B}_{1}\subseteq \hat{\B}_{2}. 
\end{equation}

\begin{rem}
We have extended $\W$ to a poset morphism between the set of subposets of $\A$, $(\Pa(\A), \leq)$, to $\vect{V}$; in particular, if one calls $i$ the following application,

\begin{center}
$\begin{array}{ccccc}
i & : & \A & \to & \U(\A) \\
 & & a & \mapsto & \widehat{\{a\}} \\
\end{array}$
\end{center}

the extension $\left(i_{!}\W(\B)=\W(\B),\B\in \U(\A)\right)$ of $\W$ to $\U(\A)$ is the left adjoint of $i^*$, the inverse image of $\W$ by $i$.

\end{rem}

\begin{nota}
Several subposets of $\A$ will play an important role in the following development. Let us note,

\begin{eqnarray}\label{notations-1}
\hat{a}=\{b\in\A |b\leq a\} \\
\cheek{a}=\{b\in \A | a\not\leq b\} \nonumber\\
\da=\{b\in\A |b\lneqq a\} \nonumber
\end{eqnarray} 
\end{nota}

\begin{defn}\label{condition-1-def}

$\W\in \hom(\A,\vect{V})$ is said to verify the property (C) if and only if,

\begin{equation}\label{def-intersection-property-1} \tag{C}
\forall a\in \A, \W(a) \cap \W(\cheek{a})\subseteq \W(\da)
\end{equation}
\end{defn}

\begin{rem}
Let $V$ be a vector space and  $V_1$ a vector subspace of $V$, we will note the quotient map with respect to $V_1$, $\pi: V\rightarrow V/V_1$ as $\mod V_1$.\\

Let $V_1,V_2,V_3$ be three vector subspaces of $V$, $V_1$ is independent of $V_2$ conditionally to $V_3$, denoted as $V_1 \independent V_2 \vert V_3$ if and only if $(V_1 \mod V_3) \cap (V_2 \mod V_3)=0$. Condition $(C)$ can be rewritten in terms of conditional independence properties: 

\begin{equation}
\W(a) \independent \W(\cheek{a}) \big|\W(\da)
\end{equation}

\end{rem}

\begin{rem}
Condition $(C)$ is a subset of condition of $(I)$ and therefore $(I)$ implies $(C)$.
\end{rem}

Let us now show that decomposability implies a stronger version of intersection property.\\

\begin{defn}[Strong intersection property]\label{strong-intersection-property}
An increasing function $\F:\A\rightarrow \vect V$ is said to verify the strong intersection property $(sI)$ if and only if for any family  $(\A_j)_{j\in J}$ of elements of $\U(\A)$,

\begin{equation}\label{def-strong-intersection-property}\tag{sI}
\bigcap \limits_{j\in J} \F(\A_j)=\F(\bigcap \limits_{j\in J}\A_j).
\end{equation}

\end{defn}

\begin{rem} The motivation for this definition can be found in \cite{GS1} Theorem 4.1. It is a natural extension of the intersection property $(I)$; $(sI)$ implies $(I)$. Consider $\A=\N$ and let $V$ be a vector space, let for any $a\in \A$ $\F(a)=V$; $\F$ verifies $(I)$ but not $(sI)$ therefore condition $(sI)$ is in general strictly stronger than condition $(I)$.
\end{rem}

\begin{prop}\label{tentative}

Let $\F\in \hom(\A,\vect{V})$, with $\A$ any poset. If $\F$ is decomposable then $\F$ verifies $(sI)$.
\end{prop}
\begin{proof}

By hypothesis, for any $v\in \F(\A)$ there is a unique collection $s_a(v), a\in\A$, with $s_a(v)\in S_a$ such that $v=\underset{a\in \A}{\sum}s_a(v)$.\\

Let $(\A_j)_{j\in J}$ be a collection of elements of $\U(\A)$, and $v\in \bigcap \limits_{j\in J} \F(\A_j)$. \\

For any $j\in J$, $a\not \in \A_j$, $s_a(v)=0$. Therefore, for any $a \not \in \bigcap \limits_{j\in J}\A_j$, $s_a(v)=0$. Therefore $v= \underset{a\in \bigcap \limits_{j\in J}\A_j}{\sum} s_a(v)$.  \\

$\bigcap \limits_{j\in J} \A_j\in \U(\A)$ and $\F(\bigcap \limits_{j\in J} \A_j)=\underset{a\in \bigcap \limits_{j\in J}\A_j}{\sum} S_a$, so $v\in \F(\bigcap \limits_{j\in J} \A_j)$. The other inclusion is true whether or not $\F$ verifies $(sI)$.

\end{proof}

The rest of the document will be dedicated to proving that, under very general assumptions on the poset, if $\F$ satisfies the intersection property then it is decomposable.

\section{Intersection Property implies decomposability over finite posets}\label{chapitre-2-section-2}

\begin{lem}\label{butterfly-3}
Let $(\W(x), x\in E)$ be a any collection of vector subspaces of $V$ and $z\in E$.\\

$\underset{x\in E}{\sum} \W(a)\cong \underset{x\in E}{\bigoplus} \W(a)$ if and only if, for any $x\in  E$ such that $x\neq z$, 

\begin{equation}
\W(x)\cap \underset{\substack{y\in E \\ y\neq x}}{\sum} \W(y) = 0
\end{equation}

\end{lem}

\begin{proof}
Let $x_i, i\in [n]$ be a finite collection of elements of $X$.  Let, $v_i \in \W(a_i), i\in [n]$, $v_z\in \W(z)$, such that $\underset{i\in [n]}{\sum} v_i +v_z=0$. Then for any $i\in [n]$, $v_i=-\underset{j\neq i}{\sum}v_j$, so $v_i\in  \left(\underset{j\neq i}{\sum}\W(x_j)\right) \cap \W(x_i)$. Therefore by hypothesis for any $i\in [n]$, $v_i=0$. And so $ v_z=0$.\\
\end{proof}

\begin{thm}\label{central-result}

Let $\W\in \hom(\A,\vect{V})$, with $\A$ finite and $V$ any vector space.\\
 $\W$ is decomposable if and only if $\W$ verifies (C).
\end{thm}

\begin{proof}

The necessary condition is a direct consequence of Proposition \ref{tentative}.\\

\underline{Sufficient condition:} Let us prove inductively on the height of the posets that property (C) implies $\W$ to be decomposable. \\

If $h(\A)=0$, then $\A=\emptyset$. $\W=0$ is decomposable, by convention $\underset{i\in \emptyset}{\sum}V_i=0$.\\

Assume that for any poset of height lower than $n\in \mathbb{N}$, property (C) implies $\W$ decomposable. Let $\A$ be of height $n+1$. For any $a\in \A$, let $W_a=\W(\da)$\\

Let $\M$ be the set of maximal elements of $\A$.\\

For $a\in \M$, let $S_a$ be a supplementary of $W_a$, in other words, $\F(a)=S_a \oplus W_a$ and let $\B=\A\setminus \M$; $\B$ is a lower set.\\

Let us show that $\left( (S_a)_{a\in \M}, \F(\B) \right)$ verify the hypotheses of Lemma \ref{butterfly-3}.\\

Let $a\in \M$, $v\in S_a \cap \left(\underset{\substack{b\in \M \\ b\neq a}}{\sum}S_b + \F(\B)\right)$.\\

As $a$ is a maximal element in $\A$, $\cheek{a}= \A \setminus a$. In particular, $\underset{\substack{b\in \M \\ b\neq a}}{\sum}s_b + \F(\B)\subseteq \F(\A \setminus a)$, it is even an equality.\\

Therefore, $v\in \F(\da)$. However by construction, $S_a \cap \F(\da)=0$. So $v=0$. \\

Therefore by Lemma \ref{butterfly-3}, $\underset{a\in \M}{\sum} S_a + \F(\B)\simeq \underset{a\in \M}{\bigoplus} S_a \oplus \F(\B)$.\\

 Furthermore $\F(\A)= \underset{a\in \A}{\sum}\F(a) =\underset{a\in \M}{\sum} \F(a) + \F(\B)$. Let us recall that, for $a\in \M$, $\F(a)\subseteq S_a+ \F(\B)$. So  $\F(\A) \subseteq \underset{a\in \M}{\sum} S_a + \F(\B)$. The other inclusion is also true so, $\F(\A)= \underset{a\in \M}{\sum} S_a + \F(\B)$.\\

Therefore, $\F(\A)\cong \underset{a\in \M}{\bigoplus} S_a \oplus \F(\B)$. Furthermore, $\B$ is of height $n$ so $\F(\B)$ is decomposable. Therefore there is $S_b, b\in \B$ such that $\underset{b\in\B}{\sum}S_b\simeq \underset{b\in \B}{\bigoplus}S_b$ and for any $b\in \B$, $\F(b)=\underset{\substack{c\in \B \\ c\leq b}}{\sum} S_b$.\\

Hence $\underset{a\in\A}{\sum}S_a\cong \underset{a\in \A}{\bigoplus}S_a$.\\

Furthermore as $\B\in \U(\A)$ , $\F(b)=\underset{\substack{c\in \B \\ c\leq b}}{\sum} S_b=\underset{\substack{c\in \A \\ c\leq b}}{\sum}S_b$, which shows that $\F$ is decomposable and ends the induction. \\

\end{proof}

\begin{cor}\label{central-result-cor}
Let $\F\in \hom(\A,\vect{V})$, with $\A$ finite and $V$ any vector space. \\
$\F$ is decomposable if and only if $\F$ verifies (I).
\end{cor}
\begin{proof}
If $\F$ verifies (I) then it verifies (C), as (C) requires a subset of the conditions required for (I) to hold.
\end{proof}

\begin{cor}\label{condition 1} 
If $\A$ is finite, condition (I) and (C) are equivalent.
\end{cor}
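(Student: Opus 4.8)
The plan is to deduce Corollary \ref{condition 1} directly from Corollary \ref{central-result-cor} (equivalently Theorem \ref{central-result}) by a chase through the two equivalences already established. Since $\A$ is finite, Corollary \ref{central-result-cor} tells us that $(I) \iff \F \text{ decomposable}$, and Theorem \ref{central-result} tells us that $\F \text{ decomposable} \iff (C)$. Composing these two biconditionals gives $(I) \iff (C)$, which is exactly the claim.

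Concretely, I would argue in two directions. For $(I) \Rightarrow (C)$: this is immediate and already implicitly used in the proof of Corollary \ref{central-result-cor} — property $(C)$ asks for the inclusion $\F(a) \cap \F(\cheek{a}) \subseteq \F(\da)$ only for the particular pairs $\B = \widehat{\{a\}}$ and $\C = \cheek{a}$ (noting $\widehat{\{a\}} \cap \cheek{a} = \da$, since $b \le a$ and $a \not\le b$ together say $b \lneq a$), so $(C)$ is a special case of the family of inclusions demanded by $(I)$. For $(C) \Rightarrow (I)$: by Theorem \ref{central-result}, $(C)$ forces $\F$ to be decomposable; then I would show decomposability implies $(I)$. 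This last implication is the one genuine piece of content, and I expect it to be the main obstacle — though it is presumably packaged in the ``necessary condition'' of Theorem \ref{central-result} / Proposition \ref{tentative}, so one option is simply to cite that. If a direct argument is wanted: given a decomposition $s_a, a\in \A$, for any full subposet $\B \in \U(\A)$ one has $\F(\B) = \sum_{b\in\B}\F(b) = \bigoplus_{b\in\B} s_b$ (using fullness to collapse $\sum_{b\in\B}\sum_{c\le b} s_c$ to $\sum_{b\in\B} s_b$), so $\F(\B)$ is spanned exactly by the summands indexed by $\B$; then for $\B, \C \in \U(\A)$, an element of $\F(\B)\cap\F(\C)$ lies in $\bigoplus_{b\in\B} s_b$ and in $\bigoplus_{c\in\C} s_c$, and since the global sum $\bigoplus_{a\in\A} s_a$ is direct, its components must be supported on $\B\cap\C$, giving membership in $\bigoplus_{a\in\B\cap\C} s_a = \F(\B\cap\C)$, which is $(I)$.

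Assembling: $(I) \Rightarrow (C)$ by inspection, and $(C) \Rightarrow \F$ decomposable $\Rightarrow (I)$ by Theorem \ref{central-result} and the direct-sum support argument above. Hence $(I)$ and $(C)$ are equivalent whenever $\A$ is finite. The only subtlety to be careful about is the identity $\widehat{\{a\}} \cap \cheek{a} = \da$ and the fact that $\widehat{\{a\}}$, $\cheek{a}$, $\da$ are all full subposets of $\A$ (so that they are legitimate inputs to condition $(I)$) — these are routine but worth stating explicitly. Since all the heavy lifting is done by the already-proven Theorem \ref{central-result} and Corollary \ref{central-result-cor}, the proof of Corollary \ref{condition 1} itself is short: it is essentially the observation that two equivalences sharing the middle term ``$\F$ decomposable'' chain together.
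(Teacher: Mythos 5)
Your proposal is correct and follows essentially the same route as the paper: $(I)\Rightarrow(C)$ by specialising to the full subposets $\hat{a}$ and $\cheek{a}$ (whose intersection is $\da$), and $(C)\Rightarrow$ decomposable $\Rightarrow(I)$ via Theorem~\ref{central-result} and Proposition~\ref{tentative}. Your optional direct support argument for ``decomposable $\Rightarrow(I)$'' is just a restatement of the proof of Proposition~\ref{tentative}, so nothing genuinely new is needed.
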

\begin{proof}
We just saw that if $\F\in \hom(\A, \vect{V})$ verifies (I) it verifies (C). If $\F$ satisfies (C) then $\F$ is decomposable by Theorem \ref{central-result}. Therefore, by Proposition \ref{tentative}, $\F$ satisfies (C).
\end{proof}

\section{Decomposability for finite posets}\label{chapitre-2-section-3}

\subsection{Counter example and predecompositions}

\begin{rem}\label{counter-example-1}
Let $\A=\N$ and $V$ a vector space. For any $a\in \A$ let $\F(a)=V$. $\F$ verifies $(I)$. This example is a counter example to Theorem \ref{central-result} for non finite posets, in other words for infinite posets (I) does not necessarily imply $\F$ decomposable.
\end{rem}

 Let us first state what always holds when $\F$ has the intersection property.

\begin{defn}\label{pre-decomposition-def}
For $a\in \A$, $\pi_a :\F(a) \twoheadrightarrow \F(a)/\F(\da)$ is surjective. Let $s_a$ be a section of $\pi_a$ and let us also note $S_a$ the image of this section: $S_a=s_a(\F(a)/\F(\da))$. We call any such collection $S_a,a\in \A$ a pre-decomposition of $\F$.
\end{defn}

\begin{prop}
Let $\F\in \hom(\A,\vect{V})$ decomposable, and $s_a, a\in \A$ a decomposition of $\F$. $s_a, a\in \A$ is a pre-decomposition of $\F$. 
\end{prop}

\begin{proof}
Let $s_a,a\in \A$ be a decomposition of $\F$. Then for any $v\in \F(a)$, there is a unique $u\in s_a$ and $w\in \F(\da)$ such that $v=u+w$. Let $s_a(v)=u$.

For any $v\in \F(\da)$, $s_a(v)=0$. Therefore $s_a$ factorizes through $\pi_a$ and $s_a=s^{'}_a\circ \pi_a$. Furthermore for any $v\in \F(a)$, 

\begin{equation}
s_a^{'}(v \mod \F(\da) ) \mod \F(\da)=[s_a(v)]=[v]
\end{equation}

Therefore $s^{'}_a$ is a section of $\pi_a$, which ends the proof.
\end{proof}

\begin{rem}
$\F\in \hom(\A,\vect{V})$ is said to verify the property (C1) if and only if for any $n\in \N$, $a\in \A$ and finite collection $a_i,i\in [n]$ such that for any $i\in [n], a_i\in \cheek{a}$,

\begin{equation}\label{condition-2-prime}\tag{C1}
\F(a) \cap \F(\underset{i\in[n]}{\bigcup}\hat{a}_i)\subseteq \F(\da)
\end{equation}

By construction (C) and (C1) are equivalent. In practice one proves (C1) in order to show (C) which under certain assumptions on $\A$ (Corollary \ref{condition-1-prime-equiv}) implies (I).  
\end{rem}

\begin{prop}\label{injectivity}
Let $\F\in \hom(\A,\vect{V})$ and $(S_a, a\in \A)$ a pre-decomposition of $\F$. Let us suppose that $\F$ verifies (C), then, 

\begin{equation}
\underset{a\in \A}{\sum} S_a \cong \underset{a\in \A}{\bigoplus} S_a
\end{equation}

\end{prop}

\begin{proof}

Let us prove by induction on $n$ that for any collection $a_i,i\in [n]$ of elements of $\A$, $\underset{i\in I}{\sum}S_{a_i}\cong \underset{i\in I}{\bigoplus}S_{a_i}$. We will use the second form of proof by induction (see Remark \ref{induction-2-form}). \\

Let $n\in \N$, and suppose that for any $n'<n$ and any collection $a_i,i\in [n']$ of elements of $\A$, $\underset{i\in [n']}{\sum}S_{a_i}\cong \underset{i\in I}{\bigoplus}S_{a_i}$.\\ 

Let $a_i,i\in [n]$ be a collection of elements of $\A$.\\

$\{a_i | i\in [n]\}$ is a finite poset. Let $\M$ be the set of its maximal elements and respectively $J=a^{-1}(\M)$. Let $\overline{\M}=a([n]\setminus J)$.\\

Let $i\in J$. For any $j\neq i$, $a_j\in \cheek{a}_i$. Furthermore, any $b\in \overline{M}$ is in any $\cheek{a}_j$. Indeed, let $b\in \overline{M}$, then there is $j\in J$ such that $b\leq a_j$; if there is $k\in J$ such that $a_k\leq b$, then $a_k\leq a_j$ which is contradictory with $a_j$ being a maximal element.\\

Therefore, $\underset{\substack{j\in J\\ j\neq i}}{\sum} S_{a_j}+ \F(\overline{M}) \subseteq\underset{b\in \cheek{a}}{\sum}\F(b)$ and $S_{a_i}\cap \left(\underset{\substack{j\in J\\ j\neq i}}{\sum} S_{a_j}+ \F(\overline{M})\right)=0$.\\

Therefore by Lemma \ref{butterfly-3}, $\underset{i\in J}{\sum}S_{a_i}+ \F(\overline{M})\simeq \underset{i\in J}{\bigoplus}S_{a_i} \oplus \F(\overline{M})$.\\

$|[n]\setminus J|<n+1$ then by induction $\underset{i\in [n]\setminus J}{\sum}S_{a_i}\simeq \underset{i\in [n]\setminus J}{\bigoplus}S_{a_i}$.\\

Finaly, $\underset{i\in [n]\setminus J}{\sum}S_{a_i}\subseteq \F(\overline{M})$, therefore $\underset{i\in [n]}{\sum}S_{a_i}\simeq \underset{i\in [n]}{\bigoplus}S_{a_i}$. Which ends the proof.

\end{proof}

\begin{cor}\label{decomposable-predecomposition-decomposition}
$\F\in \hom(\A,\vect{V})$ is decomposable if and only if any pre-decomposition of $\F$ is a decomposition.\\

\end{cor}

\begin{rem}
If $\F$ has one pre-decomposition that is a decomposition, then any pre-decomposition is a decomposition.
\end{rem}

\begin{rem}
Corollary \ref{decomposable-predecomposition-decomposition} is in fact the easiest way to build decompositions and move decompositions around through increasing functions, as example, one can see the proof of Proposition \ref{decomposable-morphism}.
\end{rem}

We therefore need to find a condition on $\A$ for which any $\F$ that verifies (C) also verifies that for all $ a\in \A$, $\F(a)=\sum_{b\in \hat{a}} S_b$. A simple way to do so would be to reiterate the proof of Theorem \ref{central-result} but in a more general setting. We would like to have a stronger version of the proof by induction and one can do so if $\A$ is well-founded.  \\

\subsection{Well-founded posets}

A case of well-founded posets that we will often meet are graded posets. 

\begin{defn}
Let $\A$ be a poset. Assume that there is $r:\A\rightarrow \N$ a strictly increasing function, in other words,

\begin{equation}
\forall a,b\in \A, a<b \implies r(a)<r(b) 
\end{equation}

and that for any $a,b\in \A$, such that $a<b$ and such that there is no $c\in \A$ such that $a<c<b$, $r(b)=r(a)+1$.\\

Then one says that $\A$ is graded and $r$ is called the rank of $\A$.

\end{defn}

\begin{prop}\label{graded-well-founded}
Let $\A$ be a poset, $\B$ a well-founded posed and $r:\A\rightarrow \B$ a stricly increasing function. Then $\A$ is well-founded.\\

Therefore any graded poset is well-founded.

\end{prop}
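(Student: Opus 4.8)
The plan is to prove the statement in two parts, exactly as it is phrased. First I would establish the general assertion: if $\B$ is well-founded and $r:\A\to\B$ is strictly increasing, then $\A$ is well-founded. Working from Definition.(\ref{well-founded_def}) in the descending-chain-condition form, I would argue by contradiction. Suppose $\A$ is not well-founded; then there is a strictly decreasing sequence $a_0 > a_1 > a_2 > \cdots$ in $\A$. Applying $r$ and using that $r$ is strictly increasing, $a_{n+1} < a_n$ forces $r(a_{n+1}) < r(a_n)$, so $\big(r(a_n)\big)_{n\in\N}$ is a strictly decreasing sequence in $\B$. This contradicts $\B$ being well-founded. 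Hence no such sequence exists in $\A$, and $\A$ is well-founded.

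For the second part, I would specialize to a graded poset $\A$ with rank $r:\A\to\N$. By the definition of graded, $r$ is strictly increasing, and $\N$ (with its usual order) is well-founded, since any strictly decreasing sequence of natural numbers terminates. Therefore the first part applies directly with $\B=\N$, and $\A$ is well-founded. I would phrase this as a one-line corollary inside the same proof, noting that the grading condition concerning covers ($r(b)=r(a)+1$ when $b$ covers $a$) is not needed here — only strict monotonicity of $r$ matters.

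I would also take care with one small point that could otherwise be an objection: the excerpt's induction/well-foundedness discussion phrases the condition as "any chain of $\A$ has a minimal element," and then restates it as the descending chain condition. I would use the descending-chain-condition formulation throughout the proof, since it is the one that interacts cleanly with strictly decreasing sequences, and it is the formulation the paper itself endorses in Definition.(\ref{well-founded_def}). No subtlety arises from the codomain being a general well-founded poset rather than $\N$: the argument is identical.

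The main obstacle here is essentially nil — this is a routine transfer-of-well-foundedness argument — so the only real care needed is in stating the contradiction cleanly and making explicit that strict monotonicity of $r$ (not the full graded structure) is what does the work. I would write roughly:

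\begin{proof}
We use the descending chain condition formulation of well-foundedness (Definition.(\ref{well-founded_def})). Suppose, for contradiction, that $\A$ is not well-founded; then there is a strictly decreasing sequence $a_0 > a_1 > a_2 > \cdots$ of elements of $\A$. Since $r$ is strictly increasing, $a_{n+1} < a_n$ implies $r(a_{n+1}) < r(a_n)$ for every $n\in\N$, so $\big(r(a_n)\big)_{n\in\N}$ is a strictly decreasing sequence in $\B$. This contradicts $\B$ being well-founded. Hence $\A$ is well-founded.

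Now let $\A$ be a graded poset with rank $r:\A\to\N$. By definition $r$ is strictly increasing, and $\N$ with its usual order satisfies the descending chain condition, hence is well-founded. Applying the first part with $\B=\N$, we conclude that $\A$ is well-founded. (Note that only the strict monotonicity of $r$ is used; the cover condition $r(b)=r(a)+1$ plays no role.)
\end{proof}
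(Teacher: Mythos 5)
Your proof is correct and follows essentially the same route as the paper: take a strictly decreasing sequence in $\A$, push it through the strictly increasing map $r$ to obtain a strictly decreasing sequence in $\B$, and contradict the well-foundedness of $\B$, then specialize to $\B=\N$ for the graded case. If anything, your version is slightly more direct, since the paper detours through a minimal element of the image chain and derives the contradiction back in $\A$, whereas you invoke the descending chain condition for $\B$ immediately.
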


\begin{proof}
Let $a_i,i\in \N$ be a strictly decreasing sequence of elements of $\A$. $r(a_i)$ is a strictly decreasing sequence sequence of elements of $\B$. Therefore there is $i\in \N$ such that, $\forall j\neq i$, $r(a_j)>r(a_i)$.\\

Therefore for any $j\in \N$, $j\neq i$, $\neg (a_j< a_i)$. As $a_i,i\in N$ is a chain it is a total order. Therefore for any $j\in \N$, $j\neq i$, $a_j\geq a_i$, and as $a_j\neq a_i$, one has that $a_j> a_i$. Therefore any $j> i$ is strictly greater than $i$, which is contradictory. \\

There is no infinite strictly descending chain in $\A$.\\

\end{proof}

\subsection{Main Theorem for Well-founded posets}

\begin{thm}\label{central-result-1}
Let $\A$ be a well-founded poset and $V$ any vector space. $\F\in \hom(\A,\vect{V})$ is decomposable if and only if $\F$ verifies (C).
\end{thm}

\begin{proof}

The necessary condition is a direct consequence of Proposition \ref{tentative}.\\

Let $\F\in \hom(\A,\vect{V})$, and $S_a,a\in\A$ be a pre-decomposition of $\F$.\\

Let us show by induction on $a\in \A$, that for any $a\in \A$, $\F(a)=\underset{b\leq a}{\sum} S_a$. 
We will use the second form of proof by induction (see Remark \ref{induction-2-form}). \\

Let us suppose that for any $b<a$, $\F(b)=\underset{c\leq b}{\sum} S_b$. \\

By construction, $\F(a)=S_a + \F(\da)$.\\

By hypothesis, for any $b< a$, $\F(b)=\underset{c\leq b}{\sum} S_c$. Therefore $\F(\da)=\underset{b<a}{\sum}\underset{c\leq b}{\sum}S_c \subseteq \underset{b<a}{\sum}S_b$. The other inclusion holds by definition of a pre-decomposition. \\

Therefore, $\F(a)=\underset{b\leq a}{\sum} S_a$.\\

As $\A$ is well-founded we can conclude (Proposition (\ref{transfinite-induction})) that for any $a\in \A$, $\F(a)=\underset{b\leq a}{\sum} S_a$.\\

Furthermore by Proposition (\ref{injectivity}), 

\begin{equation}
\underset{a\in \A}{\sum} S_a \cong \underset{a\in \A}{\bigoplus} S_a
\end{equation}

Therefore, $\F$ is decomposable. 

\end{proof}

\begin{cor}\label{central-result-cor-1}
Let $\F\in \hom(\A,\vect{V})$, with $\A$ well-founded and $V$ any vector space. \\

$\F$ is decomposable if and only if $\F$ verifies (I).
\end{cor}
\begin{proof}
If $\F$ verifies (I) then it verifies (C) as (C) requires a subset of the conditions required for (I) to hold.
\end{proof}

\begin{cor}\label{condition-1-prime-equiv} 
If $\A$ is well-founded, condition (I) and (C) are equivalent.
\end{cor}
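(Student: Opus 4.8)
The goal is to prove Corollary \ref{condition-1-prime-equiv}: for well-founded $\A$, conditions (I) and (C') are equivalent. Since (I) $\implies$ (C') is already noted (the defining inclusions of (C') are a special case of those of (I), taking $\B = \hat a$ and $\C = \cheek a$ — or rather the relevant full subposets), the real content is the converse (C') $\implies$ (I). The plan is to route everything through decomposability, exactly as was done for the finite case in Corollary \ref{condition 1}.

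\textbf{Proof plan.} First I would observe that (I) $\implies$ (C') holds unconditionally: the family of intersection constraints imposed by (C') is obtained from those of (I) by restricting $\B,\C$ to the particular full subposets $\hat a$ and $\cheek a$ (noting $\hat a, \cheek a \in \U(\A)$ and $\hat a \cap \cheek a = \da$), so this direction needs no hypothesis on $\A$. For the converse, suppose $\F$ verifies (C'). By Theorem \ref{central-result-1}, since $\A$ is well-founded, $\F$ is decomposable; let $s_a, a\in\A$ be a decomposition. Then for any subposet $\B \subseteq \A$ one has $\F(\hat\B) = \sum_{b\in\hat\B} s_b$ (the remark following Definition \ref{intersection-property-1}), and since the total sum $\sum_{a\in\A}s_a$ is direct, for $\B,\C \in \U(\A)$ we get
\begin{equation*}
\F(\B)\cap\F(\C) \;=\; \Bigl(\bigoplus_{b\in\B}s_b\Bigr)\cap\Bigl(\bigoplus_{c\in\C}s_c\Bigr) \;=\; \bigoplus_{a\in\B\cap\C}s_a \;=\; \F(\B\cap\C),
\end{equation*}
where the middle equality is the standard fact that inside a direct sum indexed by $\A$, the intersection of the sub-sums indexed by $\B$ and by $\C$ is the sub-sum indexed by $\B\cap\C$. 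Hence $\F$ verifies (I), in fact with equality. This shows (C') $\implies$ (I) $\implies$ (C') $\implies \cdots$, and the two conditions are equivalent.

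\textbf{Main obstacle.} The only non-formal point is the direct-sum intersection identity $\bigl(\bigoplus_{b\in\B}s_b\bigr)\cap\bigl(\bigoplus_{c\in\C}s_c\bigr) = \bigoplus_{a\in\B\cap\C}s_a$ used with the full index set $\A$ possibly infinite. One inclusion ($\supseteq$) is trivial. For $\subseteq$: take $v$ in the left side; since sums of subspaces only involve finitely many terms (Definition \ref{sum-not-direct}), $v = \sum_{b\in B_0}v_b = \sum_{c\in C_0}w_c$ for finite $B_0\subseteq\B$, $C_0\subseteq\C$ with $v_b\in s_b$, $w_c\in s_c$; extending both expansions by zeros to the finite index set $B_0\cup C_0$ and using that $\sum_{a\in\A}s_a$ is direct (so the components of $v$ along each $s_a$ are uniquely determined), we get $v_b = 0$ for $b\notin C_0$ and $w_c = 0$ for $c\notin B_0$, whence $v = \sum_{a\in B_0\cap C_0}v_a$ with $B_0\cap C_0\subseteq\B\cap\C$. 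This is a short finiteness argument and the rest is bookkeeping; I expect no genuine difficulty, which is presumably why the paper states it as a one-line corollary of Theorem \ref{central-result-1}.

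\begin{proof}
That (I) implies (C') holds for any poset: taking $\B=\hat a$, $\C=\cheek a$ in (I), and noting $\hat a,\cheek a\in\U(\A)$ with $\hat a\cap\cheek a=\da$, the condition (C') is recovered. Conversely, assume $\F$ verifies (C'). Since $\A$ is well-founded, Theorem \ref{central-result-1} gives that $\F$ is decomposable; fix a decomposition $s_a,a\in\A$. For $\B\in\U(\A)$ we have $\F(\B)=\F(\hat\B)=\underset{b\in\B}{\sum}s_b=\underset{b\in\B}{\bigoplus}s_b$ inside the direct sum $\underset{a\in\A}{\bigoplus}s_a$. Given $\B,\C\in\U(\A)$ and $v\in\F(\B)\cap\F(\C)$, write $v=\underset{b\in B_0}{\sum}v_b=\underset{c\in C_0}{\sum}w_c$ with $B_0\subseteq\B$, $C_0\subseteq\C$ finite, $v_b\in s_b$, $w_c\in s_c$. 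Completing both sums with zero components over the finite set $B_0\cup C_0$ and using the uniqueness of components in the direct sum $\underset{a\in\A}{\bigoplus}s_a$, we get $v_b=0$ for $b\notin C_0$ and $w_c=0$ for $c\notin B_0$, so $v\in\underset{a\in B_0\cap C_0}{\sum}s_a\subseteq\F(\B\cap\C)$. Hence $\F(\B)\cap\F(\C)\subseteq\F(\B\cap\C)$, i.e. $\F$ verifies (I).
\end{proof}
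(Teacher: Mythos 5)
Your proof is correct and takes essentially the same route as the paper: (I) $\implies$ (C') trivially, then (C') $\implies$ decomposable by Theorem \ref{central-result-1}, then decomposable $\implies$ (I). The only difference is that you prove the last implication by hand via uniqueness of components in the direct sum, whereas the paper simply cites Proposition \ref{tentative} (decomposability implies the stronger property (sI)), whose proof is the same uniqueness-of-components argument.
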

\begin{proof}
We just saw that if $\F\in \hom(\A, \vect{V})$ verifies (I) it verifies (C). If $\F$ satisfies (C) then $\F$ is decomposable by Theorem \ref{central-result-1}. Therefore, by Proposition (\ref{tentative}), $\F$ satisfies (I).
\end{proof}

\section{A bit more around intersection and decomposability}\label{chapitre-2-section-4}

\subsection{A condition equivalent to (I)}
If one considers any poset $\A$ it is not clear whether condition (C) implies (I) however there is still a property easier to verify than property (I) that is equivalent to (I), for any poset $\A$.\\

\begin{defn}\label{condition-3-def}

Let $\A$ be any poset, $V$ any vector space. $\F\in \hom(\A,\vect{V})$ is said to verify the property (C2) if and only if for any $n\in \N$, $a\in \A$ and finite collection $(a_i,i\in [n])$,

\begin{equation}\label{condition-3}\tag{C2}
\F(a) \cap \F(\underset{i\in[n]}{\bigcup}\hat{a}_i)\subseteq \F(\hat{a}\cap \underset{i\in[n]}{\bigcup}\hat{a}_i)
\end{equation}

\end{defn}

\begin{prop}
For any poset $\A$ condition (I) and (C2) are equivalent.
\end{prop}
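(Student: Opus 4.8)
The plan is to prove the equivalence by a direct double inclusion argument, reducing the apparently stronger condition (I) to the more concrete finitary condition (C2). The key observation is that in Definition (\ref{intersection-property-1}) the sums $\F(\B)$, $\F(\C)$ and $\F(\B\cap\C)$ are all computed as in Definition (\ref{sum-not-direct}), i.e. every vector in such a sum already lies in a \emph{finite} subsum. So both conditions really only talk about finitely generated pieces, and (C2) is just the instance of (I) obtained by taking $\B$ and $\C$ of a special shape.

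First I would prove that (I) implies (C2). Fix $a\in\A$ and a finite collection $a_i, i\in[n]$. Apply (I) with $\B=\hat a\in\U(\A)$ and $\C=\bigcup_{i\in[n]}\hat a_i$, which is a finite union of lower-completions and hence lies in $\U(\A)$. Then $\F(\B)=\F(\hat a)=\F(a)$ since $\hat a$ has $a$ as greatest element and $\F$ is increasing, $\F(\C)=\F(\bigcup_i\hat a_i)$, and $\B\cap\C=\hat a\cap\bigcup_i\hat a_i$; so (I) gives exactly the inclusion (\ref{condition-3}). This direction is immediate once one unwinds the notation.

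The substantive direction is (C2) implies (I). Let $\B,\C\in\U(\A)$ and take $v\in\F(\B)\cap\F(\C)$. By Definition (\ref{sum-not-direct}) there are finite families $b_j\in\B$, $c_k\in\C$ with $v\in\sum_j\F(b_j)$ and $v\in\sum_k\F(c_k)$. Each $b_j$ satisfies $\hat b_j\subseteq\B$ (as $\B$ is full), so $\sum_j\F(b_j)\subseteq\F(\bigcup_j\hat b_j)$, and likewise for the $c_k$'s; thus $v\in\F(\bigcup_j\hat b_j)\cap\F(\bigcup_k\hat c_k)$. Now I would run an induction on the number of generators $b_j$: writing $v=v'+w$ with $v'\in\sum_{j<m}\F(b_j)$ and $w\in\F(b_m)$, one wants to push each summand down into $\F(\B\cap\C)$. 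The clean way is to apply (C2) repeatedly with $a$ ranging over the $b_j$'s and the finite collection being the $c_k$'s (together with the already-processed $b_j$'s), so that at each stage a summand lying in $\F(b_j)\cap\F(\bigcup(\cdots))$ gets replaced, modulo something already in $\F(\B\cap\C)$, by a vector in $\F(\hat b_j\cap\bigcup(\cdots))\subseteq\F(\B\cap\C)$ — here using $\hat b_j\subseteq\B$ and $\bigcup_k\hat c_k\subseteq\C$ so that $\hat b_j\cap\bigcup_k\hat c_k\subseteq\B\cap\C$.

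The main obstacle is bookkeeping in this induction: one must be careful that applying (C2) to one generator $b_j$ does not spoil the membership of the remaining summands, and that the auxiliary finite union appearing in (C2) is always contained in $\C$ (or, symmetrically, in $\B\cap\C$) so that the output genuinely lands in $\F(\B\cap\C)$. I expect the right formulation is a double induction, or an induction on $m+n$ where $m$ and $n$ count the generators of the two finite subsums, symmetric in $\B$ and $\C$; the base case ($m=n=1$) is a single application of (C2) with $a=b_1$ and the collection $\{c_1\}$, using $\F(\hat b_1\cap\hat c_1)\subseteq\F(\B\cap\C)$. Everything else is routine once this inductive scheme is set up correctly.
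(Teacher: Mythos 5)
Your direction (I)$\Rightarrow$(C2) and your reduction of (C2)$\Rightarrow$(I) to finitely many generators on each side (pass to finite subsums, then use fullness of $\B$ and $\C$ to get $\bigcup_j\hat{b}_j\subseteq\B$, $\bigcup_k\hat{c}_k\subseteq\C$) are correct and are exactly how the paper proceeds. The genuine gap is the inductive step that you defer as ``bookkeeping'': as sketched it does not go through. To apply (C2) to an individual summand $v_j\in\F(b_j)$ you must know that $v_j$ itself lies in $\F$ of a finite union of principal ideals, and the only union for which this is guaranteed is $\C_1\cup\B_1$, where $\C_1=\bigcup_k\hat{c}_k$ and $\B_1=\bigcup_{j'\neq j}\hat{b}_{j'}$, because $v_j=v-\sum_{j'\neq j}v_{j'}$. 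If the auxiliary collection is only the $c_k$'s (or the ``already-processed'' generators), the hypothesis of (C2) is simply not available for $v_j$; if instead you include the other $b_{j'}$'s, the conclusion $v_j\in\F\bigl(\hat{b}_j\cap(\C_1\cup\B_1)\bigr)$ has a component in $\F(\hat{b}_j\cap\B_1)$, which is not contained in $\F(\B\cap\C)$. So your claim that each application of (C2) replaces a summand, modulo $\F(\B\cap\C)$, by a vector of $\F(\hat{b}_j\cap\C_1)$ is precisely the point that needs an argument; it is the heart of the proof, not routine bookkeeping.

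The paper's resolution (the mechanism of Theorem 1 of \cite{GS1}) is a recombination trick you are missing: fix the $c$-side and induct on the number $n$ of $b$-generators. Write $v=\sum_j v_j$, pick one index $i$, apply (C2) to $v_i\in\F(b_i)\cap\F(\C_1\cup\B_1)$ and split $v_i=w_1+w_2$ with $w_1\in\F(\hat{b}_i\cap\C_1)\subseteq\F(\B\cap\C)$ and $w_2\in\F(\hat{b}_i\cap\B_1)$. Instead of trying to push $w_2$ into $\F(\B\cap\C)$ directly, fold it back into the remaining summands: $\sum_{j\neq i}v_j+w_2=v-w_1$ lies in $\F(\B_1)$ (each term does) and also in $\F(\C_1)$ (since both $v$ and $w_1$ do), so the induction hypothesis for $n-1$ generators against the same fixed $\C_1$ places it in $\F(\B_1\cap\C_1)\subseteq\F(\B\cap\C)$, whence $v=w_1+(v-w_1)\in\F(\B\cap\C)$. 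With this step added your scheme becomes the paper's proof, a single induction on the number of generators of one side; no double induction on $m+n$ is needed.
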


\begin{proof}
The proof of the sufficient condition is an adaptation of the one of Theorem 3.1 \cite{GS1} (Lemma 3.1 is in fact condition (C2)).\\

Let $\F\in \hom(\A,\vect{V})$.\\

Let $n_1\in \N$, $a_i, i\in [n_1]$ be a collection of elements of $\A$. Let us prove by induction on $n$ that for any $b_i, i\in[n]$ collection of elements of $\A$, 

\begin{equation}
\F(\underset{i\in [n_1]}{\bigcup}\hat{a}_i) \cap \F( \underset{i\in [n]}{\bigcup}\hat{b}_i)\subseteq \F(\underset{i\in [n_1]}{\bigcup}\hat{a}_i \cap  \underset{i\in [n]}{\bigcup}\hat{b}_i) 
\end{equation}

We will use the second form of proof by induction (see Remark \ref{induction-2-form}). \\

Let us assume that for any $m<n$, and any $c_i, i\in[m]$ collection of elements of $\A$, 

\begin{equation}
\F(\underset{i\in [n_1]}{\bigcup}\hat{a}_i) \cap \F( \underset{i\in [m]}{\bigcup}\hat{c}_i)\subseteq \F(\underset{i\in [n_1]}{\bigcup}\hat{a}_i \cap  \underset{i\in [m]}{\bigcup}\hat{c}_i) 
\end{equation}

\smallskip
Let $b_i, i \in [n]$ a collection of elements of $\A$.\\

Let $i\in [n]$, and $v\in \F(\underset{i\in [n]}{\bigcup}\hat{b}_i) \cap \F(\underset{i\in [n_1]}{\bigcup}\hat{a}_i) $, there is $v_i,i\in [n]$ such that for any $i\in [n], v_i\in \F(b_i)$ and $u_i,i\in [n_1]$ such for any $i\in [n_1], u_i\in \F(a_i)$, such that,

$$v= \underset{j\in [n]}{\sum} v_j=\underset{j\in [n_1]}{\sum}u_j$$

One has that, 

$$v_i = \underset{j\in [n_1]}{\sum}u_j -\underset{\substack{j\in [n]\\ j\neq i}}{\sum} v_j$$

Pose $\A_1=\underset{j\in[n_1]}{\bigcup}\hat{a}_j $, $\B_1=\underset{\substack{j\in [n]\\ j\neq i}}{\bigcup}\hat{b}_j$.\\

Therefore, $v_i\in \F(b_i)\cap \F(\A_1\cup B_1)$ and so $v_i\in \F(\hat{b}_i\cap (\A_1\cup B_1))$.\\

Therefore, $v_i\in \F((\hat{b}_i\cap \A_1)\cup (\hat{b}\cap B_1))=\F(\hat{b}_i\cap \A_1)+\F(\hat{b}_i\cap \B_1)$ and in particular there is $w_1\in \F(\hat{b}_i\cap \A_1)$, $w_2\in \F( \B_1)$ such that $v_i=w_1+w_2$.\\

Furthermore, $\underset{\substack{j\in [n]\\ j\neq i}}{\sum} v_j +w_2 = \underset{j\in [n_1]}{\sum}u_j -w_1$.\\

But $\underset{\substack{j\in [n]\\ j\neq i}}{\sum} v_j +w_2\in \F(\B_1)$ and $\underset{j\in [n_1]}{\sum}u_j -w_1\in \F(\A_1)$.\\

One also has that $|\{b_j |\quad j\neq i\}|<n$ then by the induction hypothesis $\underset{\substack{j\in [n]\\ j\neq i}}{\sum} v_j +w_2 \in \F(\A_1\cap \B_1)$. \\

Therefore $v\in \F(\underset{i\in [n]}{\bigcup}\hat{b}_i \cap \underset{i\in [n_1]}{\bigcup}\hat{a}_i) $, which ends the proof.

\end{proof}

\subsection{Fonctoriality of decomposability}

\begin{nota}
Let $\A$ be a poset and $\B\subseteq \A$. Let $\F$ be an increasing function from $\A$ to $\vect{V}$. Let us note $\F|_\B\in \hom(\B,\vect{V})$, the restriction of $\F$ to $\B$.\\
\end{nota}

\begin{prop}\label{decomposition-restriction-property}
Let $\A$ be a poset and $\B\in \U(\A)$, let $\F\in \hom(\A, \vect{V})$ decomposable, then $\F|_\B$ is decomposable. If $\F$ verifies (I) so does $\F|_\B$.

\end{prop}

\begin{proof}
Suppose that $\F$ is decomposable, let $(S_a,a\in \A)$ be a decomposition of $\F$, then for any $a\in \B$,

\begin{equation}
\F(a)= \bigoplus_{\substack{b\in \A \\ b\leq a } } S_a= \bigoplus_{\substack{b\in \B\\ b\leq a } } S_a
\end{equation}

Suppose that $\F$ satisfies (I). Any $\C\in \U(\B)$ is also in $\U(\A)$ therefore $\F|_\B$ is decomposable.
\end{proof}

\begin{prop}\label{decomposable-morphism}
Let $f:\A\to \B$ be an order-embedding and $\F\in \hom(\A,\vect{V})$.\\

$\F$ is decomposable if and only if $f_{!}\F$ is decomposable. \\

Furthermore, any decomposition of $f_{!}\F$ induces, by restriction to $f(\A)$, a decomposition of $\F$; in other words, if $S^{\B}_{b},b\in \B$  is a decomposition of $f_{!}\F$ then $S^{\A}_a=S^{\B}_{f(a)}, a\in \A$ is a decomposition of $\F$.\\

Any decomposition of $\F$ can be extended in a decomposition of $f_{!}\F$. Let $S^{\A}_a, a\in \A$ be a decomposition of $\F$ then,

\begin{equation}
S^{\B}_{b}= \left\{
    \begin{array}{ll}
        S^{\A}_a & \mbox{if } \exists a\in \A, \quad b=f(a) \\
        0 & \mbox{otherwise}
    \end{array}
\right.
\end{equation}

is a decomposition of $f_{!}\F$.

\end{prop}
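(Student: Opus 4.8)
The plan is to establish the two explicit constructions stated in the ``furthermore'' clauses; the equivalence ``$\F$ decomposable $\iff f_*\F$ decomposable'' then follows immediately, since one construction turns a decomposition of $\F$ into one of $f_*\F$ and the other does the reverse. Everything rests on understanding $f_*\F$ concretely. For $b\in\B$ put $\A_b=\{a\in\A\mid f(a)\leq b\}$; since $f$ is increasing this is a full subposet of $\A$, and by definition $f_*\F(b)=\sum_{a\in\A_b}\F(a)=\F(\A_b)$. I will first record two consequences. (i) Because $f$ is an order-embedding, $\A_{f(a)}=\hat a$, hence $f_*\F(f(a))=\F(a)$, and more precisely $f_*\F(\du{f(a)})=\F(\du a)$, so the surjections $\pi^{f_*\F}_{f(a)}$ and $\pi^{\F}_a$ are literally the same map $\F(a)\twoheadrightarrow\F(a)/\F(\du a)$. (ii) If $b\in\B\setminus f(\A)$ then every $a$ with $f(a)\leq b$ actually satisfies $f(a)<b$ (as $b\notin f(\A)$), so $\A_b=\bigcup_{b'<b}\A_{b'}$ and therefore $f_*\F(b)=\sum_{b'<b}f_*\F(b')=f_*\F(\du b)$. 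I expect (ii) to be the only genuinely delicate point; the rest is bookkeeping.

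Using (i) and (ii), zero-padding and restriction along $f$ set up a correspondence between pre-decompositions of $\F$ and pre-decompositions of $f_*\F$. If $s^\A_a$, $a\in\A$, is a pre-decomposition of $\F$, define $s^\B_b=s^\A_a$ when $b=f(a)$ and $s^\B_b=0$ otherwise; by (i) the component $s^\B_{f(a)}$ is a section of $\pi^{f_*\F}_{f(a)}$, and by (ii) $f_*\F(b)/f_*\F(\du b)=0$ for $b\notin f(\A)$, so $s^\B_b=0$ is (forced to be) a section of $\pi^{f_*\F}_b$. Conversely, if $s^\B_b$, $b\in\B$, is a pre-decomposition of $f_*\F$, then by (i) the family $s^\A_a:=s^\B_{f(a)}$ consists of sections of the $\pi^\F_a$, i.e.\ is a pre-decomposition of $\F$.

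By Corollary \ref{decomposable-predecomposition-decomposition} it now suffices to check that these correspondences carry decompositions to decompositions. Condition (1) is trivial in both directions. For condition (2): the zero-padded family $(s^\B_b)_{b\in\B}$ has the same nonzero members as $(s^\A_a)_{a\in\A}$ re-indexed through the injection $f$, so $p^{\B}$ is injective iff $p^{\A}$ is; the restriction direction is the same remark read backwards (a subfamily of a family in direct sum is in direct sum). For condition (3) in the extension direction: for any $b\in\B$, since $\A_b$ is full, the Remark following Definition \ref{intersection-property-1} gives $f_*\F(b)=\F(\A_b)=\sum_{a\in\A_b}s^\A_a=\sum_{b'\leq b,\ b'\in f(\A)}s^\B_{b'}=\sum_{b'\leq b}s^\B_{b'}$. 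For condition (3) in the restriction direction: $\F(a)=f_*\F(f(a))=\sum_{b'\leq f(a)}s^\B_{b'}$; for $b'\leq f(a)$ with $b'\notin f(\A)$, (ii) together with the decomposition property of $s^\B$ yields $\sum_{b''\leq b'}s^\B_{b''}=f_*\F(b')=f_*\F(\du{b'})=\sum_{b''<b'}s^\B_{b''}$, so $s^\B_{b'}=0$ by directness; hence $\F(a)=\sum_{b'\leq f(a),\ b'\in f(\A)}s^\B_{b'}=\sum_{a'\leq a}s^\B_{f(a')}=\sum_{a'\leq a}s^\A_{a'}$, the middle equality using that $f$ is an order-embedding to identify $\{a'\leq a\}$ with $f(\A)\cap\{\,b'\leq f(a)\,\}$.

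This proves both ``furthermore'' statements, and the iff is an immediate corollary: a decomposition of $\F$ produces, by zero-padding, a decomposition of $f_*\F$, and conversely a decomposition of $f_*\F$ restricts to one of $\F$. The main obstacle, as noted, is the vanishing of the off-image components, i.e.\ step (ii): once one sees that $f_*\F(b)=f_*\F(\du b)$ for $b\notin f(\A)$ — which is exactly where injectivity of $f$ enters — the two families are forced to match up and all three decomposition axioms transfer mechanically.
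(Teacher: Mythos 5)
Your proposal is correct and follows essentially the same route as the paper: identify $f_{*}\F(f(a))=\F(a)$ and $f_{*}\F(\du{f(a)})=\F(\da)$ via the order-embedding, pass pre-decompositions back and forth by restriction/zero-padding, and conclude with Corollary (\ref{decomposable-predecomposition-decomposition}). You are in fact somewhat more complete than the paper: you verify the direct-sum condition by re-indexing along the injection $f$ instead of appealing to Proposition (\ref{injectivity}), and you prove, rather than merely remark, that $f_{*}\F(b)=f_{*}\F(\du{b})$ for $b\notin f(\A)$ and hence that the off-image components of any decomposition of $f_{*}\F$ vanish.
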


\begin{proof}

Let $S^{\B}_{b},b\in \B$ be a pre-decomposition of $f_{!}\F$. As for any $a\in \A$, $f_{!}\F(f(a))=\F(a)$ and by definition, $\F(\da)=f_{*}\F(\widehat{f(a)}^{*})$, one has by construction that $S^{\A}_a=S^{\B}_{f(a)}, a\in \A$ is a pre-decomposition of $\F$. \\

\underline{Assume $\F$ is decomposable}, then (Corollary \ref{decomposable-predecomposition-decomposition}) for any $a\in \A$, $f_{!}\F(f(a))= \F(a)=\underset{a_1\in \hat{a}}{\sum}S^{\A}_{a_1}$ and $\F(f(a))=\underset{b_1\in f(\hat{a})}{\sum}S^{\B}_{b_1}$. One has that $\underset{b_1\in f(\hat{a})}{\sum}S^{\B}_{b_1}\subseteq \underset{b_1\in \widehat{f(a)}}{\sum}S^{\B}_{b_1}$. \\

Furthermore, for any $b\in \B$, $f_{!}\F(b)=\underset{a : i(a)\leq b}{\sum} \F(a)$. Then one has that, $f_{!}\F(b)\subseteq \underset{a : i(a)\leq b}{\sum}S^{\A}_a\subseteq  \underset{b_1\in  \hat{b}}{\sum}S^{\B}_{b_1}$. \\

As, by Proposition (\ref{injectivity}), $\underset{b\in \B}{\sum}S^{\B}_{b}=\underset{b\in\B}{\bigoplus}S^{\B}_{b}$, we can conclude that $f_{!}\F$ is decomposable.\\

\underline{If $f_{!}\F$ is decomposable}, then $S^{\B}_{b},b\in \B$ is a decomposition of $i_{!}\F$ (Corollary \ref{decomposable-predecomposition-decomposition}). One remarks that,

\begin{equation}\label{extension-order-embedding}
S^{\B}_{b}= \left\{
    \begin{array}{ll}
        S^{\A}_a & \mbox{if } \exists a\in \A, \quad b=f(a) \\
        0 & \mbox{otherwise}
    \end{array}
\right.
\end{equation}

Let $a\in \A$, $\F(a)=\underset{b \in \widehat{f(a)}}{\sum}S^{\B}_{b}$. Furthermore from Equation (\ref{extension-order-embedding}), one can conclude that $\underset{b\in \widehat{f(a)}}{\sum}S^{\B}_{b}=\underset{a_1\in \hat{a}}{\sum}S^{\A}_{a_1}$.\\

Therefore $\F(a)=\underset{a\in \hat{a}}{\sum}S^{\A}_{a}$ and by Proposition (\ref{injectivity}) one concludes that $\F$ is decomposable.
\end{proof}

\begin{cor}\label{decomposable-powerset}
$\F\in \hom(\A,\vect{V})$ is decomposable if and only if $i_{!}\F\in \hom(\U(\A),\vect{V})$ is decomposable.\\
\end{cor}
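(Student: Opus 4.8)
The plan is to apply Proposition~\ref{decomposable-morphism} to the canonical order-embedding $i:\A\to\U(\A)$ of Definition~\ref{definition-injection-extension}. First I would verify that $i$ is indeed an order-embedding: if $\widehat{\{a\}}\subseteq\widehat{\{b\}}$ in the sense of the order on $\U(\A)$, then $a\in\widehat{\{a\}}\subseteq\widehat{\{b\}}$, so $a\le b$; and conversely $i$ is increasing since $a\le b$ immediately gives $\widehat{\{a\}}\subseteq\widehat{\{b\}}$. Next I would check that the pushforward $i_{*}\F$ from Definition~\ref{restriction-extension} coincides with the extension $i_{*}\F$ of Definition~\ref{definition-injection-extension}: for $\B\in\U(\A)$, $i_{*}\F(\B)=\sum_{a:\,i(a)\le\B}\F(a)=\sum_{a:\,\widehat{\{a\}}\subseteq\B}\F(a)=\sum_{a\in\B}\F(a)=\F(\B)$, using that $\B$ is full so $\widehat{\{a\}}\subseteq\B\iff a\in\B$. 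This reconciles the two uses of the notation $i_{*}\F$ and justifies the remark following Definition~\ref{definition-injection-extension}.

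With these two observations in place, the corollary is immediate: Proposition~\ref{decomposable-morphism} applied to $f=i$ states that $\F$ is decomposable if and only if $i_{*}\F$ is decomposable, which is exactly the claim. So the proof is essentially a one-line invocation of the preceding proposition, preceded by the bookkeeping that identifies the relevant order-embedding and its pushforward.

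The only mild subtlety — and the place I would be most careful — is the well-definedness and size of $\U(\A)$ and of the ambient sums when $\A$ is an arbitrary (possibly large) poset: one must make sure that "decomposable" for $i_{*}\F\in\hom(\U(\A),\vect(V))$ makes sense, i.e.\ that $\U(\A)$ is a genuine poset and that Definition~\ref{def-decompositio-1} applies to it. Since Section~\ref{section-2} already set up sums and direct sums over arbitrary index sets, and $\U(\A)$ is a subset of the powerset of $\A$ with the order induced via lower-completion, this is not a real obstacle — but I would state explicitly that Proposition~\ref{decomposable-morphism} is being used with $\B=\U(\A)$ so the reader sees no finiteness is needed. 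I would therefore write the proof as: "$i:\A\to\U(\A)$ of Definition~\ref{definition-injection-extension} is an order-embedding, and by the computation above its pushforward in the sense of Definition~\ref{restriction-extension} is the extension $i_{*}\F$; the result is then the special case $f=i$ of Proposition~\ref{decomposable-morphism}."
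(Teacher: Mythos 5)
Your proposal is correct and follows exactly the paper's own route: the paper's proof is precisely the observation that $i$ is an order-embedding together with an invocation of Proposition~(\ref{decomposable-morphism}). Your extra bookkeeping identifying the pushforward of Definition~(\ref{restriction-extension}) with the extension of Definition~(\ref{definition-injection-extension}) is a welcome clarification but not a different argument.
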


\begin{proof}
$i$ is an order-embedding, so one can conclude thanks to Proposition \ref{decomposable-morphism}.

\end{proof}

\begin{cor}\label{cor-extension-powerset}
Let $\F\in \hom(\A,\vect{V})$ be decomposable. Let $\B\subseteq \U(\A)$, such that $i(\A)\subseteq \B$. Then $i_{!}\F|_\B$ is decomposable.

\end{cor}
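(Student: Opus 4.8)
The plan is to exhibit $i_{*}\F_{|\B}$ as a pushforward of $\F$ along an order-embedding and then quote Proposition~\ref{decomposable-morphism}, exactly as in the proof of Corollary~\ref{decomposable-powerset}, of which this statement is a mild generalisation (the case $\B=\U(\A)$).

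First I would use the hypothesis $i(\A)\subseteq\B$: this is precisely what allows one to corestrict the order-embedding $i:\A\to\U(\A)$ of Definition~\ref{definition-injection-extension} to a map $i:\A\to\B$, $a\mapsto\widehat{\{a\}}$. Since $\B$ carries the order induced from $\U(\A)$, this corestricted map is still increasing and still reflects the order: if $i(a_{1})\le i(a_{2})$ in $\B$ then the same inequality already holds in $\U(\A)$, whence $a_{1}\le a_{2}$. So $i:\A\to\B$ is an order-embedding.

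Next I would identify $i_{*}\F_{|\B}$ with the pushforward of $\F$ along $i:\A\to\B$ in the sense of Definition~\ref{restriction-extension}. For $b\in\B$ the pushforward takes the value $\sum_{a:\,i(a)\le b}\F(a)$; because the elements of $\B$ are full subposets ordered by inclusion, $i(a)=\widehat{\{a\}}\le b$ is equivalent to $a\in b$, so this sum equals $\sum_{a\in b}\F(a)=\F(b)$, which is exactly the value at $b$ of the restriction to $\B$ of $i_{*}\F\in\hom(\U(\A),\vect(V))$. Hence the two readings of the symbol $i_{*}\F_{|\B}$ coincide, and this increasing function is the pushforward of $\F$ along an order-embedding.

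Finally, since $\F$ is decomposable and $i:\A\to\B$ is an order-embedding, Proposition~\ref{decomposable-morphism} yields directly that $i_{*}\F_{|\B}$ is decomposable. There is no genuine obstacle; the only two points needing care are the stability of the order-embedding property under corestriction to $\B$ (which rests on $i(\A)\subseteq\B$ and on $\B$ carrying the induced order) and the verification that the pushforward along $i:\A\to\B$ agrees with the restriction to $\B$ of the pushforward along $i:\A\to\U(\A)$.
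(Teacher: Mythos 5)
Your proof is correct and is essentially the paper's own argument: the paper's proof consists exactly of the observation that $i$ corestricted to $\B$ is an order-embedding (using $i(\A)\subseteq \B$), followed by an appeal to Proposition~\ref{decomposable-morphism}. Your additional checks (that the corestriction reflects the order and that the pushforward along $i:\A\to\B$ agrees with the restriction to $\B$ of $i_{*}\F$) just make explicit what the paper leaves implicit.
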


\begin{proof}
$i$ restricted on its codomain to $\B$ is an order-embedding.
\end{proof}

\section{Interaction decomposition for collections of random variables}\label{section-4}

Let us now show a generalization of the decomposition into interaction subspaces; to do so let us first recall some results from \cite{GS1}. Let $I$ denote a finite set. The poset $\A$ will be $\Pa(I)$. Let for all $i\in I$, $E_i$ be \textbf{any} non empty set. $E=\underset{i\in I}{\prod} E_i$ is a set of functions on $I$. For $x\in E$, one has that $pr_i(x)=x(i)$, and for $a\subseteq I$ non empty,  we will note $x_{|a}$ as $x_a$. We will call $E_a=\underset{i\in a}{\prod} E_i$ and,

$$\begin{array}{ccccc}
\pi_a& : &E & \to & E_a\\
& & x & \mapsto &x_a\\
\end{array}$$

Let $\ast$ be a given singleton. Then there is only one application of domain $E$ to $\ast$ that we call $\pi_\emptyset$; we pose $x_\emptyset=\pi_\emptyset (x)$. Pose $\g=\R^E$.\\

\begin{defn}[Factor subspaces]
For any $a\in \Pa(I)$, let $\F(a)$ be the vector subspace of $\g$ constituted of functions $f$ that can be factorised by $\pi_a$, in other words there is $\tilde{f}$ such that $f=\tilde{f}\circ \pi_a$. 

\end{defn}

$\F$ is an increasing function of $\hom(\Pa(I),\vect(\g))$, furthermore Theorem 3.1 \cite{GS1} asserts that for any $\B_1,\B_2\in \U(\Pa(I))$,

\begin{equation}\label{intersection-for-variables}
\F(\B_1)\cap \F(\B_2) \subseteq \F(\B_1 \cap \B_2)
\end{equation}

\begin{cor}[Decomposition of interactions]\label{decomposition-of-interactions}
$\F$ is decomposable.
\end{cor}
\begin{proof}

As the intersection property $(I)$ (Equation (\ref{intersection-for-variables})) holds for $\F$, by Theorem \ref{central-result}, $\F$ is decomposable.

\end{proof}

\begin{rem}
 We will call any choice of decomposition of $\F$ a decomposition of interactions. In Appendix B Proposition B.4 of \cite{Lauritzen} a decomposition is chosen with respect to the euclidian scalar product, and other ones can be found in \cite{Yeung}.

\end{rem}

Let now $I$ be any set. Let $E=\underset{i\in I}{\prod}E_i$ for any collection of sets $E_i, i\in I$. Let us note $\Pa_f(I)$ the set of subsets of $I$ which are finite and once again $\g=\R^E$.

\begin{cor}\label{decomposition-of-interactions-extended}
$\Pa_f(I)$ is well-founded.
\end{cor}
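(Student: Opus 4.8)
The statement to prove is simply that $p(I)$, the poset of finite subsets of an arbitrary set $I$ ordered by inclusion, is well-founded. The plan is to exhibit a strictly increasing function from $p(I)$ to a poset already known to be well-founded, and then invoke Proposition~\ref{graded-well-founded}. The natural candidate is the cardinality map $r:p(I)\to\N$, $r(a)=|a|$, which is well-defined precisely because every element of $p(I)$ is a finite set. Since $\N$ with its usual order is well-founded (every strictly decreasing sequence of natural numbers terminates, as it is bounded below), Proposition~\ref{graded-well-founded} will give the conclusion immediately once we check that $r$ is strictly increasing.

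The one thing requiring a word of justification is the strict monotonicity: if $a\subsetneq b$ with $a,b\in p(I)$, then $a$ and $b$ are finite and $a$ is a proper subset of $b$, so $|a|<|b|$, i.e. $r(a)<r(b)$. This is the only genuine step, and it is elementary finite-set combinatorics. An even more self-contained alternative, if one prefers not to route through $\N$, is to verify the descending chain condition directly: given a strictly decreasing sequence $a_0\supsetneq a_1\supsetneq a_2\supsetneq\cdots$ in $p(I)$, the cardinalities $|a_0|>|a_1|>|a_2|>\cdots$ form a strictly decreasing sequence of natural numbers, which must terminate after at most $|a_0|+1$ steps; hence no infinite strictly descending chain exists.

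There is no real obstacle here. The only point worth flagging is that the claim is about chains (totally ordered subsets) having minimal elements, as in Definition~\ref{well-founded_def}, and one should make sure the cardinality argument is applied to a chain so that comparability is automatic; but since any strictly decreasing sequence is a chain, and Proposition~\ref{graded-well-founded} is stated in exactly the form needed, this is handled by simply citing it. I would therefore present the proof in two sentences: define $r=|\cdot|:p(I)\to\N$, observe it is strictly increasing because proper inclusions of finite sets strictly decrease cardinality, and conclude by Proposition~\ref{graded-well-founded} together with the well-foundedness of $\N$.

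\begin{proof}
Consider the map $r:p(I)\to\N$ defined by $r(a)=|a|$, which is well-defined since every $a\in p(I)$ is finite. If $a,b\in p(I)$ with $a<b$, i.e. $a\subsetneq b$, then $a$ is a proper subset of the finite set $b$, so $|a|<|b|$; hence $r$ is strictly increasing. Since $\N$ is well-founded, Proposition~\ref{graded-well-founded} implies that $p(I)$ is well-founded.
\end{proof}
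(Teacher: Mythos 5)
Your proof is correct and follows essentially the same route as the paper: both use the cardinality map $|\cdot|:p(I)\to\N$ together with Proposition~\ref{graded-well-founded} (you invoke its first part directly via strict monotonicity into the well-founded poset $\N$, while the paper phrases it as $|\cdot|$ being a rank making $p(I)$ graded). No gaps.
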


\begin{proof}
The cardinal function on $\Pa_f(I)$ to $\N$, $|.|:a\rightarrow |a|$ is a rank for $\Pa_f(I)$ and $\Pa_f(I)$ is graded. Therefore, Corollary \ref{graded-well-founded}, $\Pa_f(I)$ is well-founded. 
\end{proof}

For $a\in \Pa_f(I)$. $\F(a)$ will still be the set of functions from $E$ to $\R$ that factorise through $\pi_a$ and $\F\in \hom(\Pa_f(I),\vect(\g))$.\\

\begin{prop}
$\F$ is decomposable.
\end{prop}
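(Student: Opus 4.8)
The goal is to show that $\F \in \hom(p(I), \vect(\g))$ is decomposable when $I$ is an arbitrary (possibly infinite) set. By Corollary~(\ref{decomposition-of-interactions-extended}) the poset $p(I)$ is well-founded, so by Corollary~(\ref{central-result-cor-1}) — equivalently by Theorem~(\ref{central-result-1}) together with Corollary~(\ref{condition-1-prime-equiv}) — it suffices to verify that $\F$ satisfies the intersection property $(I)$, or the a priori weaker but more tractable condition $(\mathrm{C}')$. The plan is to establish the local condition $(\mathrm{C}')$: for every $a \in p(I)$, $\F(a) \cap \F(\cheek{a}) \subseteq \F(\da)$. As noted in the remark following Definition~(\ref{condition-1-prime-def}), it is enough to check the finitary version $(\mathrm{C}1)$: for any finite family $a_1,\dots,a_n \in \cheek{a}$ (i.e.\ $a \not\subseteq a_i$ for each $i$), one has $\F(a) \cap \F\!\left(\bigcup_{i\in[n]} \hat a_i\right) \subseteq \F(\da)$.

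\textbf{Reducing to the known finite case.} First I would observe that all the elements $a, a_1, \dots, a_n$ are \emph{finite} subsets of $I$, so they are contained in a common finite subset $J \subseteq I$ (e.g.\ $J = a \cup a_1 \cup \dots \cup a_n$). Let $\B = p(J) = \Pa(J)$, a finite full subposet of $p(I)$ (it is lower-closed since subsets of a finite set are finite). The restriction $\F_{|\B}$ is, by inspection of the definition of factor subspaces, exactly the factor-subspace morphism for the finite index set $J$ and the product $E' = \prod_{i\in J} E_i$ — more precisely, the factor subspaces of $\R^{E}$ indexed by subsets of $J$ are the pullbacks along $E \to E_J$ of the factor subspaces of $\R^{E_J}$, and pullback along a surjection is an injective lattice-preserving map, so the intersection relations are the same. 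Hence Equation~(\ref{intersection-for-variables}) (Theorem~1 of \cite{GS1}) applies to $\F_{|\B}$: for all $\C_1, \C_2 \in \U(\B)$, $\F(\C_1) \cap \F(\C_2) \subseteq \F(\C_1 \cap \C_2)$.

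\textbf{Concluding $(\mathrm{C}1)$.} Now apply this with $\C_1 = \hat a$ (the full subposet $\{b \in \B : b \subseteq a\}$, which since $a \subseteq J$ is the same whether computed in $\B$ or in $p(I)$) and $\C_2 = \bigcup_{i\in[n]} \hat a_i$, both of which lie in $\U(\B)$. This gives $\F(a) \cap \F\!\left(\bigcup_i \hat a_i\right) \subseteq \F(a) \cap \F\!\left(\bigcup_i \hat a_i\right) \cap \F(\hat a) \subseteq \F\!\left(\hat a \cap \bigcup_i \hat a_i\right)$, since $\F(a) \subseteq \F(\hat a)$. It remains to check $\hat a \cap \bigcup_i \hat a_i \subseteq \da$, i.e.\ that any $b \in p(I)$ with $b \subseteq a$ and $b \subseteq a_i$ for some $i$ satisfies $b \subsetneq a$: indeed $b \subseteq a \cap a_i$, and if $b = a$ then $a \subseteq a_i$, contradicting $a_i \in \cheek{a}$. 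Hence $b \subsetneq a$, so $\F\!\left(\hat a \cap \bigcup_i \hat a_i\right) \subseteq \F(\da)$ and $(\mathrm{C}1)$ holds. Since $(\mathrm{C}1) \Leftrightarrow (\mathrm{C}')$ and $p(I)$ is well-founded, Theorem~(\ref{central-result-1}) yields that $\F$ is decomposable. The main obstacle, and the only place needing real care, is the claim that $\F_{|\B}$ is genuinely the finite factor-subspace arrangement on $J$ — i.e.\ that restricting to finite subsets of $I$ does not change the relevant intersections; once this ``compatibility with pullback'' point is made precise, everything else is bookkeeping.
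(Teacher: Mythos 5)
Your proof is correct, but it takes a genuinely different route from the paper. The paper's own proof is a two-line citation: Corollary 2 of \cite{GS1} already asserts the intersection property $(I)$ for $\F$ on $p(I)$ with $I$ arbitrary, and since $p(I)$ is well-founded, Corollary.(\ref{central-result-cor-1}) gives decomposability. You instead re-derive the needed hypothesis from the \emph{finite} case (Equation.(\ref{intersection-for-variables}), Theorem 1 of \cite{GS1}): you verify the finitary condition (C1) by packing $a, a_1,\dots,a_n$ into a finite $J\subseteq I$, identifying $\F_{|\Pa(J)}$ with the pullback $\pi_J^{*}$ of the factor arrangement on $\R^{E_J}$, and using that an injective linear map preserves both sums and intersections of subspaces, so the finite-case inclusion transfers to $\g=\R^{E}$; then $\hat a\cap\bigcup_i \hat a_i\subseteq \da$ gives (C1), hence (C'), and Theorem.(\ref{central-result-1}) concludes. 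The key step — that $\pi_J^{*}$ is injective (each $E_i$ nonempty, so $E\to E_J$ is onto) and hence lattice-preserving, and that $\F(b)=\pi_J^{*}\F^{J}(b)$ for $b\subseteq J$ — is sound, and your choice of $\C_1=\hat a$, $\C_2=\bigcup_i\hat a_i$ in $\U(\Pa(J))$ is legitimate. What each approach buys: the paper's argument is shorter but leans on an external result (Corollary 2 of \cite{GS1}) stated for arbitrary $I$; yours is self-contained given only the finite-index statement actually recalled in this paper, and it illustrates concretely the advertised practical role of (C1)/(C') as the condition one checks in applications. Essentially, your finite-support reduction is a proof sketch of Corollary 2 of \cite{GS1} itself, folded into the proposition.
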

\begin{proof}
Corollary 4.1 \cite{GS1} states that $\F$ verifies the intersection property $(I)$ therefore as $\Pa_f(I)$ is well-founded, Corollary \ref{central-result-cor-1} tells us that $\F$ is decomposable.
\end{proof}

Let us conclude this section by remarking that one can extend $\F$ to $\Pa(I)$ and that this extension is decomposable even though $\Pa(I)$ is not necessarily well-founded.\\

\begin{defn}(Generalized Factor subspaces)\\
One can always inject $\Pa(I)$ in $\U(\Pa_f(I))$ by the increasing function, 
\begin{center}
$\begin{array}{ccccc}
j & : & \Pa(I) & \to & \U(\Pa_f(I)) \\
 & & a & \mapsto & \{b\in \Pa_f(I) | b\leq a\} \\
\end{array}$

\end{center}

For $a\in \Pa(I)$, let $H(a)= i_{!}\F(j(a))$; $H\in \hom(\Pa(I),\vect(\g))$ (see Section 4 \cite{GS1}).
\end{defn}

\begin{rem}
$\hat{a}$ is not defined as $a\not \in \Pa_f(I)$. Furthermore $j$ is even an order-embedding
\end{rem}

\begin{prop}
$H$ is decomposable.
\end{prop}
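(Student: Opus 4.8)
The statement to prove is that $H$ is decomposable, where $H = i_*\F \circ j$ with $j : \Pa(I) \to \U(p(I))$ the order-embedding just defined. The plan is to reduce this to the decomposability of $\F$ on $p(I)$, which was established in the previous proposition, by invoking the machinery of Section \ref{section-3}, in particular Proposition \ref{decomposable-morphism} and Corollary \ref{decomposable-powerset}.

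The first step is to observe that, by the preceding proposition, $\F \in \hom(p(I), \vect(\g))$ is decomposable. Then, by Corollary \ref{decomposable-powerset}, its extension $i_*\F \in \hom(\U(p(I)), \vect(\g))$ is decomposable as well, since $i : p(I) \to \U(p(I))$ is an order-embedding. The second step is to identify $H$ as a restriction-type reduction of $i_*\F$ along $j$. Concretely, $j$ is an order-embedding of $\Pa(I)$ into $\U(p(I))$ (as noted in the footnote to the definition of $j$), and by construction $H(a) = i_*\F(j(a))$ for every $a \in \Pa(I)$; that is, $H = j_*(i_*\F)$ in the notation of Definition \ref{restriction-extension}, because for an order-embedding $f$ one has $f_*\G(f(a)) = \G(a)$, and here $j(\Pa(I))$ together with $i(p(I))$ all sit inside $\U(p(I))$. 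Applying Proposition \ref{decomposable-morphism} (or Corollary \ref{cor-extension-powerset}) to the order-embedding $j$ and the decomposable function $i_*\F$ then yields that $H$ is decomposable, and moreover that a decomposition of $\F$ extends to one of $H$.

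The only subtlety — and the step I expect to require the most care — is checking that $H$ really is $j_*(i_*\F)$ rather than merely agreeing with it on the image $j(\Pa(I))$: one must verify that $i_*\F(j(a)) = \sum_{b \in p(I),\, b \le a} \F(b)$ coincides with the pushforward formula $j_*(i_*\F)(j(a)) = \sum_{\B \in \U(p(I)) : \B \le j(a)} i_*\F(\B)$, which holds because $j(a)$ is itself the largest such $\B$ (it is $\{b \in p(I) : b \le a\}$, a full subposet), so the sum is just $i_*\F(j(a))$. Once this identification is in place, decomposability of $H$ follows immediately from Proposition \ref{decomposable-morphism}, since $j$ is an order-embedding and $i_*\F$ is decomposable. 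I would therefore write the proof in essentially two lines: $\F$ decomposable on $p(I)$ $\Rightarrow$ $i_*\F$ decomposable on $\U(p(I))$ by Corollary \ref{decomposable-powerset}, and then $H = j_*(i_*\F)$ with $j$ an order-embedding $\Rightarrow$ $H$ decomposable by Proposition \ref{decomposable-morphism}.
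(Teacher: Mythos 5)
Your overall strategy is the paper's: reduce everything to the decomposability of $\F$ on $p(I)$ and transport it through the order-embedding $j$ using the Section \ref{section-3} machinery (the paper does this by identifying $\Pa(I)$ with $j(\Pa(I))$ and applying Corollary \ref{cor-extension-powerset} to $i_{*}\F_{|j(\Pa(I))}$, which is exactly your parenthetical alternative). The problem is the identification your main line actually relies on: $H=j_{*}(i_{*}\F)$ is not meaningful. By Definition \ref{restriction-extension}, the pushforward $j_{*}$ takes elements of $\hom(\Pa(I),\vect(\g))$ to elements of $\hom(\U(p(I)),\vect(\g))$, whereas $i_{*}\F$ already lives on $\U(p(I))$; what is true is only $H=(i_{*}\F)\circ j$, a pullback along $j$. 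Correspondingly, the ``subtlety'' you check is vacuous: your formula $\sum_{\B\in\U(p(I)):\,\B\leq j(a)} i_{*}\F(\B)$ is not the pushforward formula of Definition \ref{restriction-extension} (which sums over elements of the domain poset whose image lies below a given element of the codomain), and its equality with $i_{*}\F(j(a))$ is just monotonicity of $i_{*}\F$, so it does not supply the identification needed to invoke Proposition \ref{decomposable-morphism} for $j$.

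The gap is small and repairable in either of two ways. Either argue as the paper does: $j$ is an order-isomorphism onto $j(\Pa(I))\subseteq \U(p(I))$, one has $i(p(I))\subseteq j(\Pa(I))$, and under this identification $H$ is $i_{*}\F_{|j(\Pa(I))}$, which is decomposable by Corollary \ref{cor-extension-powerset}. Or keep your route through all of $\U(p(I))$, but then the identity you must verify is $j_{*}H=i_{*}\F$: for every $\C\in\U(p(I))$, $j_{*}H(\C)=\sum_{a\in\Pa(I):\, j(a)\subseteq\C} H(a)=\sum_{b\in\C}\F(b)=i_{*}\F(\C)$, since every finite $b\in\C$ satisfies $j(b)=\hat{b}\subseteq\C$ and $H(b)\supseteq\F(b)$, while conversely $j(a)\subseteq\C$ gives $H(a)=\sum_{b\in j(a)}\F(b)\subseteq\sum_{b\in\C}\F(b)$. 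With that identity in hand, Corollary \ref{decomposable-powerset} gives $j_{*}H=i_{*}\F$ decomposable, and Proposition \ref{decomposable-morphism} applied to the order-embedding $j$ gives $H$ decomposable. Without one of these verifications, the passage from ``$i_{*}\F$ is decomposable'' to ``$H$ is decomposable'' is not justified as written.
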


\begin{proof}

Let us identify $\Pa(I)$ with $j(\Pa(I))$, then $H$ can be identified to $i_{*}\F_{|j(\Pa(I))}$. As $i(\Pa_f(I))\subseteq j(\Pa(I))$, by Corollary \ref{cor-extension-powerset}, $H$ is decomposable.

\end{proof}

\section*{Funding}

This work was supported by Université de Paris.

\bibliographystyle{ieeetr}
\bibliography{bintro}

\begin{thebibliography}{1}

\bibitem{GS1}
G.~Sergeant-Perthuis, ``Bayesian/graphoid intersection property for
  factorisation models.'' arXiv:1903.06026v2, 2019.

\bibitem{Kellerer1964}
H.~G. Kellerer, ``Masstheoretische {M}arginalprobleme,'' {\em Mathematische
  Annalen}, vol.~153, no.~3, pp.~168--198, 1964.

\bibitem{Lauritzen}
S.~L. Lauritzen, {\em Graphical Models}.
\newblock Oxford Science Publications, 1996.

\bibitem{Speed}
T.~P. Speed, ``A note on nearest-neighbour gibbs and markov probabilities,''
  {\em Sankhy\=a: The Indian Journal of Statistics, Series A}, 1979.

\bibitem{Yeung}
T.~H. Chan and R.~W. Yeung, ``Probabilistic inference using function
  factorization and divergence minimization,'' in {\em Towards an Information
  Theory of Complex Networks: Statistical Methods and Applications} (M.~D.
  et~al., ed.), ch.~3, pp.~47--74, Springer, 2011.

\bibitem{Matus}
F.~Mat{\'u}{\v{s}}, ``Discrete marginal problem for complex measures,'' {\em
  Kybernetika}, vol.~24, no.~1, pp.~36--46, 1988.

\bibitem{Davidson}
R.~Davidson, ``Determination of confounding,'' in {\em Stochastic Analysis}
  (D.~Kendall and E.~Harding, eds.), John Wiley and Sons, 1973.

\bibitem{Haberman}
S.~J. Haberman, ``Direct products and linear models for complete factorial
  tables,'' {\em Annals of Statistics}, 1975.

\end{thebibliography}





\end{document}